\documentclass[11pt]{article}
\usepackage[numbers,sort&compress]{natbib}
\usepackage{enumerate}
\usepackage{tikz}
\usepackage{pgfplots}
\usepackage{amscd}
\usepackage{amsmath}
\usepackage{latexsym}
\usepackage{amsfonts}
\usepackage{amssymb}
\usepackage{amsthm}
\usepackage{verbatim}
\usepackage{mathrsfs}
\usepackage{enumerate}
\usepackage{hyperref}

 \oddsidemargin .5cm \evensidemargin .5cm \marginparwidth 40pt
 \marginparsep 10pt \topmargin 0.30cm
 \headsep1pt
 \headheight 0pt
 \textheight 9.1in
 \textwidth 6in
 \sloppy

 \setlength{\parskip}{8pt}

\theoremstyle{plain}
\theoremstyle{definition}\newtheorem{theorem}{Theorem}[section]
\theoremstyle{plain}\newtheorem{lemma}[theorem]{Lemma}
\theoremstyle{plain}\newtheorem{coro}[theorem]{Corollary}
\theoremstyle{plain}
\theoremstyle{remark}\newtheorem{remark}{Remark}[section]
\usepackage{xcolor}

\newcommand{\norm}[1]{\left\|#1\right\|}
\newcommand{\Div}{\mathrm{div}\,}
\newcommand{\B}{\Big}

\newcommand{\be}{\begin{equation}}
\newcommand{\ee}{\end{equation}}
 \newcommand{\ba}{\begin{aligned}}
 \newcommand{\ea}{\end{aligned}}

\newcommand{\fbxo}{\int_{B_{\varepsilon}(x)}\!\!\!\!\!\!\!\!\!\!\!\!\!\!\!\! -~\,\,~\,~\,}

\newcommand{\fbxoo}{\int_{B_{\varepsilon}(0)}\!\!\!\!\!\!\!\!\!\!\!\!\!\!\!\! -~\,~\,~\,~\,}

  \newcommand{\f}{\frac}
    
  \newcommand{\ben}{\begin{enumerate}}
   \newcommand{\een}{\end{enumerate}}

\newcommand{\ti}{\nabla}

\newcommand{\Rmnum}[1]{\expandafter\@slowromancap\romannumeral #1@}

\allowdisplaybreaks

\numberwithin{equation}{section}
\begin{document}
 \title{On anisotropic energy conservation criteria   of  incompressible fluids}
 \author{Yanqing Wang\footnote{Corresponding author. School of Mathematics and   Information Science, Zhengzhou University of Light Industry, Zhengzhou, Henan  450002,  P. R. China Email: wangyanqing20056@gmail.com},\;~~Wei Wei\footnote{ School of Mathematics and Center for Nonlinear Studies,  Northwest University, Xi'an, Shaanxi 710127,  P. R. China  Email: ww5998198@126.com }
  \;~~and ~~Yulin Ye\footnote{School of Mathematics and Statistics, Henan University, Kaifeng, Henan  475004, P. R. China. Email: ylye@vip.henu.edu.cn }
   }
\date{}
\maketitle
\begin{abstract}
 In this paper, by means of divergence-free condition, we establish
an anisotropic energy conservation class enabling one component of velocity in the largest space $L^{3} (0,T; B^{1/3}_{3,\infty})$ for the 3D inviscid incompressible fluids, which extends the celebrated result obtained by  Cheskidov,    Constantin,   Friedlander and   Shvydkoy in \cite[Nonlinearity 21 (2008)]{[CCFS]}. For   viscous flows, we generalize famous Lions's energy conservation criteria to allow   the
horizontal components and vertical part of velocity to have different integrability.

 \end{abstract}
\noindent {\bf MSC(2020):}\quad 35L65, 42B25, 42B35, 76D05 \\\noindent
 {\bf Keywords:} Euler equations; Navier-Stokes equations; anisotropic; energy conservation; Littlewood-Paley theory \\
\section{Introduction}
\label{intro}
\setcounter{section}{1}\setcounter{equation}{0}
The following Euler equations provide a description of
the evolution of  inviscid incompressible fluids
\begin{equation}\left\{\begin{aligned}\label{Euler}
&u_{t}+u\cdot \nabla u+\nabla \Pi=0,\\ &\text{div}\, u=0,\\
&u|_{t=0}=u_{0}(x),
\end{aligned}\right.\end{equation}
where $u$ represents velocity field and $\Pi$ stands for the pressure.
 The
initial  velocity $u_0$ satisfies   $\text{div}\,u_0=0$.

Formally, any smooth solution  of the Euler equations \eqref{Euler} satisfies the kinetic energy conservation
\begin{equation}\label{ei}
\f12\int_{\Omega}|u(x,t)|^{2}dx=\f12\int_{\Omega}|u_{0}(x)|^{2}dx,
\end{equation}
where $\Omega$ is either the whole space $\mathbb{R}^{3}$ or the periodic domain $\mathbb{T}^{3}$.
Lower regularity of solutions to the Euler equations may violate the energy conservation
\eqref{ei}, which was conjectured by Onsager in \cite{[Onsager]}. Indeed, Onsager
conjectured that  the   threshold of solutions in H\"older spaces for the validity of the conservation of kinetic energy is 1/3 in the Euler equations \eqref{Euler}. The rigid side  of Onsager's conjecture  was
originated from Eyink's work \cite{[Eyink]}. Eyink proved that weak solutions conserve the   kinetic energy if $u\in C_{\ast}^{\alpha}$ with $ \alpha>1/3$, where $C_{\ast}^{\alpha}$ is a subspaces of $C^{\alpha}$.
  Subsequently, Constantin, E and Titi \cite{[CET]} proved the   positive  part  of Onsager's conjecture and showed that weak solutions preserve the kinetic  energy \eqref{ei} if $u\in L^{3}(0,T; B^{\alpha}_{3,\infty}(\mathbb{T}^{3}))$ with $\alpha>1/3$. In  \cite{[DR]}, Duchon and Robert introduced the   inertial anomalous dissipation term and applied it to show that the  kinetic  energy is invariant if weak solutions satisfy
  $\int_{\mathbb{T}^{n}}| u(x+\xi,t)-u(x,t) |^3dx \leq C(t)|\xi|\sigma(|\xi|),$ where $C(t)\in L^1(0,T)$ and $\sigma(a)\rightarrow 0$ as $a\rightarrow 0$.
  The results of \cite{[CET],[DR],[Eyink]} were
sharpened  by Cheskidov, Constantin, Friedlander and Shvydkoy  in \cite{[CCFS]}, where they proved that energy is conserved
for weak solutions  in $L^{3} (0,T; B^{1/3}_{3,c(\mathbb{N})}).$
Recently, Fjordholm and Wiedemann refined the aforementioned  results
by a slightly weaker assumption that $u\in L^{3} (0,T;\underline{B}^{1/3 }_{3,VMO})$.
In addition, Berselli \cite{[B]} showed that  the weak solutions in $ L^{\f1\alpha+\delta}(0,T;C^{\alpha}(\mathbb{T}^{3}))$ for $\alpha\in(1/3,1)$ and $\delta>0$   conserve  the energy in the Euler equations.
Very recently, Berselli and Georgiadis \cite{[BG]} extended Constantin, E and Titi's  energy class $L^{3}(0,T; B^{\alpha}_{3,\infty}(\mathbb{T}^{3}))$ to a wider range of exponents as $L^{\f1\alpha}(0,T; B^{\beta}_{\f{2}{1-\alpha},\infty}(\mathbb{T}^{3})), \alpha\in(1/3,1),1/3<\alpha<\beta<1.$  In this direction, one may refer to \cite{[BTW],[BCS],[BG],[BKR],[WWWY],[Chae],[FGSW],[CY]}.
We turn our attention to the other side of Onsager conjecture.
After a series of works \cite{[DS0],[DS1],[DS2]} due to De Lellis and Sz\'ekelyhidi, the flexible part  of Onsager's conjecture for the 3D Euler equations was  confirmed  by Isett in \cite{[Isett]} (see also the recent paper by Giri and   Radu \cite{[GR]}).

 The inclusion relations of the aforementioned   spaces are that,
for $\alpha >1/3$,
\be\label{includ}
C_{\ast}^{\alpha} \subseteq C^\alpha \subseteq B^\alpha_{3,\infty}\subseteq B^{1/3}_{3,c(\mathbb{N})}\subseteq \underline{B}^{1/3}_{3,VMO}\subseteq B^{1/3}_{3,\infty}.
\ee
It is worth pointing out that
Cheskidov, Constantin, Friedlander and Shvydkoy constructed  a divergence-free vector field with non-vanishing energy flux in the largest space $B^{\frac{1}{3}}_{3,\infty}$ in \cite{[CCFS]}. Hence, it seems that it is not expected to obtain energy conservation class  $L^{3} (0,T; B^{1/3}_{3,\infty}).$
On the other hand, anisotropic regularity criteria based on  partial components of velocity in the Leray-Hopf weak solutions of the  viscous flows \eqref{NS} have attracted considerable attention (see \cite{[KZ],[CT],[CZZ],[HLLZ],[NP],[BC07],[Zheng],[WZZ],[WZ]} and   references therein). However, almost all previous sufficient conditions for energy conservation of weak solutions to the
Euler equations \eqref{Euler} are in the context of isotropic regularity.
A natural  question arises whether partial components of velocity in the largest spaces $L^{3} (0,T; B^{1/3}_{3,\infty}) $ can guarantee the energy conservation \eqref{ei} of weak solutions to   invicid fluids \eqref{Euler}.
 We address this issue and formulate our result as follows:
 \begin{theorem}\label{the1.1}
  Let $u$ be a weak solution of the incompressible   Euler equations \eqref{Euler} on   $\mathbb{R}^{3}$. Suppose that  \be\label{1.4}
u_h\in L^{3} (0,T; B^{\alpha}_{3,c(\mathbb{N})}(\mathbb{R}^3)), u_3\in L^{3} (0,T; B^{\beta}_{3,\infty}(\mathbb{R}^3)),  1/3\leq \alpha <1\ \text{and}\ \beta\geq \frac{1-\alpha}{2}.
 \ee
  Then the  kinetic energy is invariant.
 \end{theorem}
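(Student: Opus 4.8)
The plan is to adapt the Littlewood--Paley commutator scheme of Cheskidov--Constantin--Friedlander--Shvydkoy to the anisotropic setting, using the incompressibility constraint to trade a vertical derivative for horizontal ones. First I would fix a smooth Littlewood--Paley decomposition and, for the low-frequency truncation $u_{\le Q}=\sum_{q\le Q}\Delta_q u$, test the (mollified) Euler system against $u_{\le Q}$. Since $\text{div}\,u=0$ the pressure contributes nothing, and the kinetic energy identity reduces to controlling the flux $\Pi_Q=\int_{\mathbb{R}^3}\nabla u_{\le Q}:\big((u\otimes u)_{\le Q}-u_{\le Q}\otimes u_{\le Q}\big)\,dx$ (equivalently a Constantin--E--Titi commutator $r_\varepsilon(u,u)$ in the mollifier formulation). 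Energy conservation is then equivalent to $\int_0^T\Pi_Q\,dt\to0$ as $Q\to\infty$; the time integrability is supplied by H\"older's inequality, since each of the three velocity factors lies in $L^3(0,T;L^3)$ under \eqref{1.4} (note $\beta>0$ because $\alpha<1$).

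The heart of the argument is a componentwise bookkeeping of the flux. Writing the contraction as $\sum_{i,j}\int\partial_j(u_i)_{\le Q}\,r^{ij}$ and recalling that the commutator $r^{ij}$ carries one frequency-localized increment of $u_i$ and one of $u_j$, a scaling count shows that the term indexed by $(i,j)$ is controlled by the relevant Besov seminorms times $2^{(1-2s_i-s_j)Q}$, where $s_h=\alpha$ and $s_3=\beta$; hence it stays bounded in $Q$ exactly when $2s_i+s_j\ge1$. Thus every term with at least one horizontal slot is under control: the horizontal--horizontal terms by $\alpha\ge1/3$, the terms $(i,j)=(h,3)$ by $2\alpha+\beta\ge1$ (which follows from $\alpha\ge1/3$ and $\beta\ge(1-\alpha)/2$), and the terms $(i,j)=(3,h)$ by $\alpha+2\beta\ge1$, i.e.\ precisely $\beta\ge(1-\alpha)/2$. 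The single obstruction is the purely vertical term $(i,j)=(3,3)$, namely $\int\partial_3(u_3)_{\le Q}\,r^{33}$, which would demand $3\beta\ge1$ and which is moreover the only flux term containing no horizontal velocity factor.

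Here is where I would invoke the divergence-free condition: since $\partial_3u_3=-\partial_1u_1-\partial_2u_2$, I rewrite $\int\partial_3(u_3)_{\le Q}\,r^{33}=-\sum_{h=1,2}\int\partial_h(u_h)_{\le Q}\,r^{33}$. This replaces the vertical derivative by a horizontal one and simultaneously inserts a horizontal velocity factor into the term; its size is now governed by $2^{(1-\alpha-2\beta)Q}$, bounded precisely under $\beta\ge(1-\alpha)/2$. After this substitution every surviving piece of the flux contains at least one factor drawn from the horizontal velocity $u_h$, localized near the cutoff frequency $Q$, while at most two vertical factors survive in any term.

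The final step---and the main obstacle---is to upgrade boundedness to decay in the critical cases ($\alpha=\beta=1/3$, or $\beta=(1-\alpha)/2$ with $\alpha>1/3$), where the relevant power of $2^{Q}$ is exactly $0$. Here I would exploit that the critical contributions to $\Pi_Q$ come from frequencies $\gtrsim Q$, so that the distinguished horizontal factor appears as $\Delta_q u_h$ with $q\gtrsim Q$; the hypothesis $u_h\in B^\alpha_{3,c(\mathbb{N})}$ forces $\sup_{q\ge Q}2^{\alpha q}\|\Delta_q u_h\|_{L^3}\to0$, and this tail decay drives the whole term to zero, whereas the vertical factors need only be bounded---which is all that $u_3\in B^\beta_{3,\infty}$ provides. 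The delicate point is the frequency bookkeeping in the paraproduct/commutator decomposition: one must verify that the $c(\mathbb{N})$ decay of the single horizontal factor genuinely survives the sum over interacting frequencies even though both vertical factors sit only in the larger space $B^\beta_{3,\infty}$, and that the high--high to low interactions do not spoil the localization. Assembling these estimates and integrating in time yields $\int_0^T\Pi_Q\,dt\to0$, which is the asserted conservation of energy.
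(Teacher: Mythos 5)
Your proposal follows essentially the same route as the paper: the Littlewood--Paley/Constantin--E--Titi commutator scheme, the componentwise decomposition of the flux, the divergence-free substitution $\partial_3 S_N u_3=-(\partial_1 S_N u_1+\partial_2 S_N u_2)$ to inject a horizontal factor into the purely vertical term, the same exponent conditions $2\alpha+\beta\ge1$ and $\alpha+2\beta\ge1$, and the $c(\mathbb{N})$ tail decay of the single horizontal factor forcing the flux to vanish while the vertical factors remain merely bounded. The "delicate" frequency bookkeeping you flag is handled in the paper exactly as you anticipate, by writing each factor as a convolution of an $\ell^1$ localized kernel $\Gamma$ with the sequence $d_j=2^{j\alpha}\|\Delta_j u_h\|_{L^3}$, so that $d_j\to0$ yields $(\Gamma\ast d)(N)\to0$.
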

This theorem  extends the famous Cheskidov, Constantin, Friedlander and Shvydkoy's energy preservation class $L^{3} (0,T; B^{1/3}_{3,c(\mathbb{N})})$ in \cite{[CCFS]}. The generalization   is twofold. The first one is that energy conservation sufficient conditions \eqref{1.4} allow   the vertical velocity to be  in the largest space $B^{1/3}_{3,\infty}$.   The
other is that the energy class \eqref{1.4} enables two components and the rest of the velocity to have the    different  Besov  regularities.  To the knowledge of authors, it seems to be the first energy balance criterion for  both  inviscid and viscous incompressible flows in the framework of anisotropic  regularities. The proof of this theorem relies on the following observation:
By means of divergence-free condition, we can reformulate the nonlinear terms
\begin{displaymath}
\left( \begin{array}{ccc}
u_{1}\partial_{1}u_{1} & u_{2}\partial_{2}u_{1} & u_{3}\partial_{3}u_{1} \\
u_{1}\partial_{1}u_{2} & u_{2}\partial_{2}u_{2} & u_{3}\partial_{3}u_{2} \\
u_{1}\partial_{1}u_{3} & u_{2}\partial_{1}u_{3} & u_{3}\partial_{3}u_{3}
\end{array} \right)
\end{displaymath}
as
\begin{displaymath}
\left( \begin{array}{ccc}
u_{1}\partial_{1}u_{1} & u_{2}\partial_{2}u_{1} & u_{3}\partial_{3}u_{1} \\
u_{1}\partial_{1}u_{2} & u_{2}\partial_{2}u_{2} & u_{3}\partial_{3}u_{2} \\
u_{1}\partial_{1}u_{3} & u_{2}\partial_{1}u_{3} & -u_{3}(\partial_{1}u_{1}+\partial_{2}u_{2})
\end{array} \right).
\end{displaymath}
We notice that every term includes at least one component of the horizontal velocity. This helps us to achieve the vertical velocity in the largest spaces $ L^{3} (0,T; B^{\f13}_{3,\infty})$ for the energy conservation \eqref{ei}.
As for the torus case, we have the following result.
\begin{theorem}\label{the1.2} Let $u$ be a weak solution to  the incompressible   Euler equations \eqref{Euler} on $\mathbb{T}^{3}$.
Assume that $$u_{h}\in L^{3} (0,T;\underline{B}^{\alpha }_{3,VMO}
 (\mathbb{T}^3) ), u_{3}\in L^{3} (0,T;B^{\beta}_{3,\infty}
   (\mathbb{T}^3)),  1/3\leq \alpha <1\ \text{and}\ \beta\geq \frac{1-\alpha}{2}. $$
Then the energy is   conserved.
\end{theorem}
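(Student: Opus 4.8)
The plan is to run the Littlewood--Paley argument of Cheskidov, Constantin, Friedlander and Shvydkoy on $\mathbb{T}^3$, following the proof of Theorem~\ref{the1.1} but replacing the $c(\mathbb{N})$ structure of $u_h$ by the $VMO$ structure and carrying out all estimates with periodic (Fourier series) Littlewood--Paley blocks. First I would fix the low-frequency truncation $u_{\le Q}=\sum_{q\le Q}\Delta_q u$, which is again divergence free and mean free, test the weak form of \eqref{Euler} against $u_{\le Q}$, and note that the pressure contribution $\int_{\mathbb{T}^3}\nabla\Pi\cdot u_{\le Q}\,dx$ vanishes. Writing the convective term in divergence form $(u\cdot\nabla u)_i=\partial_j(u_ju_i)$ and using $\operatorname{div}u_{\le Q}=0$, the truncated energy then obeys, schematically,
\[
\tfrac12\frac{d}{dt}\|u_{\le Q}\|_{L^2(\mathbb{T}^3)}^2=\Pi_Q:=-\sum_{i,j}\int_{\mathbb{T}^3}\partial_j u_{i,\le Q}\,\big[(u_ju_i)_{\le Q}-u_{j,\le Q}u_{i,\le Q}\big]\,dx,
\]
so that energy conservation \eqref{ei} reduces to showing $\int_0^T\Pi_Q\,dt\to0$ as $Q\to\infty$.

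The decisive second step is to insert the divergence-free reformulation of the convective term from the Introduction before estimating $\Pi_Q$. For $i\in\{1,2\}$ each summand already carries the horizontal factor $u_i$, so the associated cubic quantity is built from the triple $(u_i,u_i,u_j)$ and contains a horizontal velocity. For $i=3$ the divergence form would produce the purely vertical triple $(u_3,u_3,u_3)$ through the block $\partial_3(u_3u_3)$; I would avoid this by keeping $u_3\partial_3u_3=-u_3(\partial_1u_1+\partial_2u_2)$ in advective form, which replaces that dangerous triple by $(u_3,u_3,u_1)$ and $(u_3,u_3,u_2)$. After this rearrangement every term in $\Pi_Q$ involves at least one horizontal component $u_1$ or $u_2$.

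The third step is the anisotropic commutator estimate. For each term I would expand the truncation commutator, use Bernstein's inequality to balance the derivative against the frequency $2^Q$, and apply H\"older in $x$ with three $L^3$ factors. A horizontal increment is controlled by the $B^{\alpha}_{3,\infty}$ modulus (dyadically $2^{-q\alpha}\|\Delta_q u_h\|_{L^3}$), a vertical increment by the $B^{\beta}_{3,\infty}$ modulus. The worst surviving triple $(u_3,u_3,u_h)$ then scales like $2^{-Q(2\beta+\alpha-1)}$, so $\Pi_Q$ is uniformly bounded precisely when $2\beta+\alpha\ge1$, i.e. $\beta\ge\frac{1-\alpha}{2}$; the remaining triples $(u_h,u_h,u_h)$ and $(u_h,u_h,u_3)$ are no worse once $\alpha\ge1/3$. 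To upgrade boundedness to vanishing I would place the guaranteed horizontal factor into the high-frequency oscillatory piece and invoke $u_h\in\underline{B}^{\alpha}_{3,VMO}$: its averaged high-frequency tail tends to zero, while the two vertical factors only use the uniform $B^{\beta}_{3,\infty}$ bound. Integrating in $t$ with H\"older, using $u_h\in L^3_t B^{\alpha}$ and $u_3\in L^3_t B^{\beta}$, yields $\int_0^T\Pi_Q\,dt\to0$ and hence \eqref{ei}.

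The hard part is this last upgrade. Because only the horizontal velocity carries little-o ($VMO$) decay while $u_3$ lies merely in $B^{\beta}_{3,\infty}$ with $\beta$ possibly below $1/3$, the vanishing of the flux must be extracted entirely from the single horizontal factor that the reformulation guarantees in every term; the paraproduct/commutator decomposition must therefore be organized so that this horizontal factor always lands in the oscillatory high-frequency block, and one must check that the averaged $VMO$ tail still forces the dyadic flux sum to zero on the torus. The borderline regularity $\beta=\frac{1-\alpha}{2}$ with $\alpha=1/3$, where the scaling exponent $2\beta+\alpha-1$ is exactly zero, is the critical case with no slack, and is precisely why the weaker $\underline{B}^{\alpha}_{3,VMO}$ class (rather than plain $B^{1/3}_{3,\infty}$) is required for the horizontal components.
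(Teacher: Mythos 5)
Your overall architecture matches the paper's: the divergence-free rewriting $u_3\partial_3u_3=-u_3(\partial_1u_1+\partial_2u_2)$ so that every cubic flux term carries at least one horizontal factor, and the exponent bookkeeping $3\alpha\ge 1$, $\alpha+2\beta\ge 1$ (hence $2\alpha+\beta\ge1$) are exactly what the paper uses. But your final step --- ``invoke $u_h\in\underline{B}^{\alpha}_{3,VMO}$: its averaged high-frequency tail tends to zero'' inside a Littlewood--Paley truncation argument --- is a genuine gap. The dyadic decay $2^{j\alpha}\|\Delta_j u_h\|_{L^3}\to 0$ is precisely the definition of the class $B^{\alpha}_{3,c(\mathbb{N})}$ (see \eqref{2.1}), and the paper's inclusion chain \eqref{includ} places $\underline{B}^{1/3}_{3,VMO}$ strictly between $B^{1/3}_{3,c(\mathbb{N})}$ and $B^{1/3}_{3,\infty}$: membership in the VMO class controls only the \emph{ball-averaged} increment $\varepsilon^{-\alpha}\bigl(\int\fbxo|f(x)-f(y)|^{3}dy\,dx\bigr)^{1/3}\to 0$, not the size of individual frequency blocks, whose kernel $\tilde h$ is sign-indefinite and not compactly supported. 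So your argument, as written, proves the theorem only under the stronger hypothesis $u_h\in L^3(0,T;B^{\alpha}_{3,c(\mathbb{N})})$ (essentially Theorem \ref{the1.1} transplanted to the torus), not for the stated $\underline{B}^{\alpha}_{3,VMO}$ class. At the borderline $3\alpha=1$ or $\alpha+2\beta=1$ there is no positive power of $2^{-Q}$ left over, so the little-o must come from somewhere, and the VMO hypothesis simply does not supply it in dyadic form.

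The paper sidesteps this by abandoning Littlewood--Paley truncation for the torus case and mollifying in space with a compactly supported nonnegative kernel $\eta_\varepsilon$ instead: the flux is then the Constantin--E--Titi commutator $(u_iu_j)^{\varepsilon}-u_i^{\varepsilon}u_j^{\varepsilon}$, which Lemma \ref{lem2.3} bounds by $o(\varepsilon^{\alpha+\beta})$ precisely because that commutator is controlled by ball averages of increments --- the exact quantity the $\underline{B}^{\alpha}_{q,VMO}$ norm measures --- while the gradient factors are handled by Lemma \ref{lem2.2}. If you want to keep your route, you must either strengthen the hypothesis on $u_h$ to $B^{\alpha}_{3,c(\mathbb{N})}$ or replace $S_N$ by a positive compactly supported mollifier and rerun your commutator estimates in increment form, which is the paper's proof.
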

For the whole space, as a byproduct of Theorem \ref{the1.1}, we have a more general result.
 \begin{coro}\label{coro1.3} Let $u$ be a weak solution of the incompressible   Euler equations \eqref{Euler} on   $\mathbb{R}^{3}$.  Then energy conservation  \eqref{ei} of weak solutions  is valid  if
$$\ba
 u_1 \in L^{p_{1}}(0,T;B^{\f{1}{p_{1 }}}_{\f{{2}{p_{1}}}{p_{1}-1},c(\mathbb{N})}(\mathbb{R}^{3})),
 u_2 \in L^{p_{2}}(0,T;B^{\f{1}{p_{2}}}_{\f{2p_{2}}{p_{2}-1},c(\mathbb{N})}(\mathbb{R}^{3})),  u_3 \in L^{p_{3}}(0,T;B^{\f{1}{p_{3}}}_{\f{{2}{p_{3}}}{p_{3}-1},\infty}(\mathbb{R}^{3})),  \ea$$
 where $p_{1},  p_{2},  p_{3} \in(1,3].$
 \end{coro}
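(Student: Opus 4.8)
The plan is to rerun the Littlewood--Paley energy-flux analysis behind Theorem \ref{the1.1}, but now carrying a separate triple of exponents for each velocity component. Writing $s_i=1/p_i$ and $q_i=\tfrac{2p_i}{p_i-1}$, I first truncate \eqref{Euler} in frequency at scale $\lambda_Q=2^Q$ (equivalently, mollify), test the resulting system against the regularized velocity $u^{\le Q}$, and discard the pressure by the divergence-free condition. This produces the energy balance $\tfrac12\int|u^{\le Q}(t)|^2\,dx-\tfrac12\int|u^{\le Q}(0)|^2\,dx=\int_0^t\Pi_Q\,ds$, so that \eqref{ei} is equivalent to $\int_0^T\Pi_Q\,dt\to0$ as $Q\to\infty$. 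The decisive structural input is the divergence-free reformulation of the nonlinear matrix recorded in the Introduction: after replacing $u_3\partial_3u_3$ by $-u_3(\partial_1u_1+\partial_2u_2)$, every entry, hence every trilinear piece of $\Pi_Q$, carries at least one horizontal factor $u_1$ or $u_2$, and in the reformulated vertical entry the surviving derivative falls on a horizontal component.

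Next I estimate each trilinear piece. By the commutator form of the flux, every term has the schematic shape (two copies of one component) $\times$ (one copy of another) with a single derivative, and Bony's paraproduct splits it into high--high--low frequency interactions. The core point is the arithmetic identity $1/q_i=\tfrac12-\tfrac{1}{2p_i}$, which converts the spatial Hölder budget into the temporal one: pairing the factors with their spatial exponents $q_i$ and applying Hölder in time with the exponents $p_i$ closes with matching budgets. Where the raw integrabilities do not line up I rebalance the frequency-localized pieces by Bernstein's inequality, and I control the low-frequency factor through the finite-energy bound $u\in L^{\infty}(0,T;L^2(\mathbb{R}^3))$. Since $s_i=1/p_i$, the derivative loss $+1$ is exactly compensated by the smoothness gain $-\sum s_i$, so the net power of $\lambda_Q$ in each term is nonpositive (zero in the borderline case), and the flux is bounded uniformly in $Q$.

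Boundedness is then upgraded to decay. Because every term of $\Pi_Q$ contains a horizontal factor and $u_1,u_2$ lie in the little-Besov class with third index $c(\mathbb{N})$, the associated high-frequency amplitudes tend to $0$; summing the critically scaled frequency series against these vanishing amplitudes forces $\int_0^T\Pi_Q\,dt\to0$, exactly as in the mechanism of \cite{[CCFS]}. The purely horizontal diagonal contributions, those indexed $(1,1)$ and $(2,2)$, involve a single component and are handled by the one-component critical energy-conservation estimate underlying Theorem \ref{the1.1} (the $c(\mathbb{N})$ endpoint of the class of \cite{[BG]}, now applied at the exponents $p_1$ and $p_2$, noting that $p_i\in(1,3]$ corresponds to $1/p_i\in[1/3,1)$); the reformulation has removed the only genuinely vertical diagonal term $(3,3)$.

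The main obstacle is precisely the reformulated vertical term $u_3u_3(\partial_1u_1+\partial_2u_2)$: here the derivative must sit on the horizontal factor, never on $u_3$, which is what the divergence-free rewriting guarantees, and one must verify that two vertical factors of $\infty$-regularity together with one horizontal derivative factor still yield a nonpositive power of $\lambda_Q$. This is the quantitative heart of the argument and is exactly where the choice $q_i=\tfrac{2p_i}{p_i-1}$, $s_i=1/p_i$ is forced; it is the anisotropic counterpart of the borderline relation $\beta\ge\tfrac{1-\alpha}{2}$ of Theorem \ref{the1.1}. A secondary technical point is reconciling the three distinct spatial integrabilities $q_1,q_2,q_3$ inside a single trilinear integral on the unbounded domain $\mathbb{R}^3$, for which the Bernstein rebalancing combined with the $L^2$ energy bound on the low modes is indispensable.
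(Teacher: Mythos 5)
Your proposal re-derives the whole flux estimate from scratch, whereas the paper proves this corollary as a short reduction to Theorem \ref{the1.1}: for each component one interpolates the dyadic blocks between the energy space and the hypothesis space,
$\|\Delta_j u_i\|_{L^{3}}\leq \|\Delta_j u_i\|_{L^{2}}^{1-p_i/3}\,\|\Delta_j u_i\|_{L^{2p_i/(p_i-1)}}^{p_i/3}$,
multiplies by $2^{j/3}=\bigl(2^{j/p_i}\bigr)^{p_i/3}$, takes the supremum in $j$ and integrates in time (Hölder with $3=p_i\cdot\frac{3}{p_i}$ against $L^{\infty}(0,T;L^{2})$), obtaining $u_h\in L^{3}(0,T;B^{1/3}_{3,c(\mathbb{N})})$ and $u_3\in L^{3}(0,T;B^{1/3}_{3,\infty})$; Theorem \ref{the1.1} with $\alpha=\beta=1/3$ then finishes the proof. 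Note that the constant in the interpolation is uniform in $j$, which is what preserves the $c(\mathbb{N})$ property for $u_1,u_2$.

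The gap in your version is precisely the step you defer to ``rebalancing.'' With three independent exponents the temporal Hölder inequality does not close: for $p_1=p_2=p_3=2$ one has $\sum_i 1/p_i=3/2>1$, so a product of three factors measured in $L^{p_i}(0,T)$ lands only in $L^{2/3}(0,T)$, which does not control the time integral of the flux. Likewise, since $q_i=\tfrac{2p_i}{p_i-1}\geq 3$ for $p_i\leq 3$, you would need to \emph{lower} the spatial Lebesgue exponent of each frequency-localized factor from $q_i$ to $3$; on the unbounded domain $\mathbb{R}^{3}$ this cannot be done by Hölder (infinite measure) nor by Bernstein, which moves frequency-localized pieces from lower to higher Lebesgue exponents, i.e.\ in the wrong direction. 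The only available mechanism is interpolation of each factor against $u\in L^{\infty}(0,T;L^{2})$ — not merely ``controlling the low-frequency part'' by it — and once you do that for all three components simultaneously you have exactly reproduced the paper's reduction to the $\alpha=\beta=1/3$ case of Theorem \ref{the1.1}. So the skeleton is salvageable, but the decisive quantitative step is missing as written, and the tool you name for it (Bernstein) cannot perform it.
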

As is said above, even for   viscous fluids \eqref{NS},
it seems that there is no relevant  anisotropic sufficient conditions for energy equality of weak solutions. Next, we shall focus on the validity of energy balance law
\be\label{EENS}
 \f12\|u(T)\|_{L^{2}(\mathbb{R}^{3})}^{2}+  \int_{0}^{T}\|\nabla u\|_{L^{2}(\mathbb{R}^{3})}^{2}ds=
  \f12\|u_0\|_{L^{2}(\mathbb{R}^{3})}^{2}
 \ee
for weak solutions of the following Navier-Stokes equations
 \be\left\{\ba\label{NS}
&u_{t} -\Delta  u+ u\cdot\ti
u  +\nabla \Pi=0, \\
&\Div u=0,\\
&u|_{t=0}=u_0.
\ea\right.\ee
It is well-known that  the   global Leray-Hopf weak solutions of the 3D Navier-Stokes equations \eqref{NS} just meet
 the energy inequality
 \be\label{EINS}
 \f12\|u(T)\|_{L^{2}(\mathbb{R}^{3})}^{2}+  \int_{0}^{T}\|\nabla u\|_{L^{2}(\mathbb{R}^{3})}^{2}ds\leq \f12\|u_0\|_{L^{2}(\mathbb{R}^{3})}^{2}
 \ee
 rather than energy equality \eqref{EENS}. A natural question is what is the minimal regularity for Leray-Hopf weak solutions keeping the energy equality \eqref{EENS}.
 In a seminal work, Lions showed that $u\in L^{4}(0,T;L^{4}(\mathbb{R}^{3})) $ guarantees energy balance law  \eqref{EENS} in \cite{[Lions]}. From then on, the sufficient conditions for energy equality of the Leray-Hopf weak solutions
have attracted considerable attention (see e.g. \cite{[Lions],[Shinbrot],[BY],[CCFS],[CL],[BC],[Zhang],[Galdi]} and references therein). We list some known representative results in this direction below:
A Leray-Hopf weak solution $u$ to the Navier-Stokes equations \eqref{NS} satisfies the energy equality if one of the following conditions holds
\begin{itemize}
\item Shinbrot \cite{[Shinbrot]}:
\be\label{Shinb}
u\in L^{p}(0,T;L^{q}(\mathbb{R}^{3})),~\text{with}~\f{2}{p}+
 \f{2}{q}=1~\text{and}~q\geq 4;\ee
\item Taniuchi \cite{[Taniuchi]}, Beirao da Veiga-Yang \cite{[BY]}:
\be\label{tby}
u\in L^{p}(0,T;L^{q}(\mathbb{R}^{3})),~\text{with}~  \f{1}{p}+
 \f{3}{q}=1 ~and~ 3<q< 4;\ee
\item  Cheskidov-Constantin-Friedlander-Shvydkoy \cite{[CCFS]}: $u\in L^{3}(0,T;B^{\f13}_{3,\infty}(\mathbb{R}^{3}));$
\item Cheskidov-Luo \cite{[CL]}:   $u\in L^{\beta,\infty}(0,T;B^{\f2\beta+\f2p-1}_{p,\infty}(\mathbb{R}^{3})),$~with~$\f2p+\f1\beta<1$~and~$1\leq\beta<p\leq\infty;$
 \item Berselli-Chiodaroli  \cite{[BC]}, Zhang \cite{[Zhang]}:
\be\label{bcz}
\nabla u \in L^{p}\left(0, T ; L^{q}\left(\mathbb{R}^{3}\right)\right),~\text{with}~
\frac{1}{p}+\frac{3 }{q}=2~\text{and}~\frac{3 }{2}<q<\frac{9}{5},~\text{or}~
\frac{1}{p}+\frac{6}{5 q}=1~\text{and}~ q\geq\frac{9}{5}.\ee
   \end{itemize}
In the following, we state our anisotropic criteria for energy conservation of the Leray-Hopf weak solutions.
   \begin{theorem}\label{the1.4}
The energy equality of Leray-Hopf weak solutions $u$ to the 3D Navier-Stokes equations \eqref{NS} is valid if one of the following four conditions is satisfied
 \begin{enumerate}[(1)]
 \item  $u_{h}\in L^{p_{1}}(0,T;L^{q_{1}}(\mathbb{R}^{3}))\cap L^{4}(0,T;L^{4}(\mathbb{R}^{3}))$       and  $u_{3}\in L^{p_{2}}(0,T;L^{q_{2}}(\mathbb{R}^{3}))$  with
$\f{1}{p_{1}}+\f{1}{p_{2}}=\f12$  and  $\f{1}{q_{1}}+\f{1}{q_{2}}=\f12$;
 \item $u_{h}\in L^{p }(0,T;L^{q }(\mathbb{R}^{3}))$ for $\f{2}{p }+\f{3}{q }=1$, $3\leq q < \infty;$
     \item $\nabla u_{h}\in L^{p }(0,T;L^{q }(\mathbb{R}^{3}))$ for $\f{2}{p }+\f{3}{q }=2$, $\f32\leq q <\infty;$
     \item
     $u_{h}\in L^{3}(0,T;B^{\alpha}_{3,\infty}(\mathbb{R}^{3})),$ $u_{3}\in L^{3}(0,T;B^{\beta}_{3,\infty}(\mathbb{R}^{3})),$ $1/3\leq\alpha\leq 1/2,\  \beta\geq \frac{1-\alpha}{2}.$
\end{enumerate}
 \end{theorem}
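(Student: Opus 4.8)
The plan is to regularize by spatial mollification, set $u^{\varepsilon}=u\ast\eta_{\varepsilon}$, test the mollified momentum equation against $u^{\varepsilon}$, and use $\Div u=0$ to drop the pressure and the cubic term $\int u^{\varepsilon}\otimes u^{\varepsilon}:\nabla u^{\varepsilon}$. This yields the regularized balance
\be\label{prop:bal}
\f12\norm{u^{\varepsilon}(T)}_{L^{2}}^{2}+\int_{0}^{T}\norm{\nabla u^{\varepsilon}}_{L^{2}}^{2}\,ds=\f12\norm{u_{0}^{\varepsilon}}_{L^{2}}^{2}+\int_{0}^{T}r_{\varepsilon}\,ds,\qquad r_{\varepsilon}=\int[(u\otimes u)^{\varepsilon}-u^{\varepsilon}\otimes u^{\varepsilon}]:\nabla u^{\varepsilon}.
\ee
By the Leray--Hopf bounds $u\in L^{\infty}L^{2}\cap L^{2}H^{1}$ the linear terms converge, so everything reduces to $\int_{0}^{T}r_{\varepsilon}\,ds\to0$. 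The structural input is the reformulation from the introduction: substituting $\partial_{3}u_{3}^{\varepsilon}=-(\partial_{1}u_{1}^{\varepsilon}+\partial_{2}u_{2}^{\varepsilon})$ in the $(3,3)$ slot of $r_{\varepsilon}$ forces every surviving term to carry a horizontal velocity factor, and no purely vertical term $u_{3}u_{3}\,\partial_{3}u_{3}$ is left. The extra device, used when the horizontal gradient is not itself in the hypothesis (cases (1)--(2)), is to integrate the horizontal derivative by parts in the two doubly vertical terms, turning $\int[(u_{3}^{2})^{\varepsilon}-(u_{3}^{\varepsilon})^{2}]\,\partial_{h}u_{h}^{\varepsilon}$ into $-\int\partial_{h}[(u_{3}^{2})^{\varepsilon}-(u_{3}^{\varepsilon})^{2}]\,u_{h}^{\varepsilon}$, which exposes an \emph{undifferentiated} horizontal factor $u_{h}^{\varepsilon}$ to absorb the horizontal regularity.

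For conditions (1)--(3) I would close $r_{\varepsilon}\to0$ by anisotropic H\"older. In case (1) the pure-horizontal terms vanish by Lions' argument from $u_{h}\in L^{4}L^{4}$, every mixed commutator is controlled through $\norm{u_{h}u_{3}}_{L^{2}L^{2}}\le\norm{u_{h}}_{L^{p_{1}}L^{q_{1}}}\norm{u_{3}}_{L^{p_{2}}L^{q_{2}}}$ which lands in $L^{2}L^{2}$ exactly because $\f1{p_{1}}+\f1{p_{2}}=\f12$ and $\f1{q_{1}}+\f1{q_{2}}=\f12$, and the doubly vertical terms close after the integration by parts above, pairing $u_{h}\in L^{p_{1}}L^{q_{1}}$ with $u_{3}\,\partial_{h}u_{3}$. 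Cases (2) and (3) are the anisotropic Serrin and gradient-Serrin scalings, and here the point is that the horizontal hypothesis is \emph{conjugate} to the Leray--Hopf interpolation line $\f{2}{r}+\f{3}{s}=\f32$ for $u_{3}$. Writing each reformulated term as (Serrin factor $u_{h}$)$\times$(Leray--Hopf factor)$\times$(dissipation derivative), the temporal H\"older balance is automatic while the spatial one is compatible with the Leray--Hopf line precisely when $\f2p+\f3q=1$; in case (3) one first uses the Sobolev embedding $\nabla u_{h}\in L^{p}L^{q}\Rightarrow u_{h}\in L^{p}L^{3q/(3-q)}$ for the undifferentiated factors and the hypothesis $\f2p+\f3q=2$ directly for $\partial_{h}u_{h}$, and the same conjugacy closes the estimate, with $\nabla u\in L^{2}L^{2}$ supplying the derivative factor throughout.

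For the anisotropic Besov case (4) I would switch to the Littlewood--Paley energy flux of Cheskidov--Constantin--Friedlander--Shvydkoy, tracking the flux $\Pi_{N}$ through frequency $2^{N}$ via Bony's paraproduct applied to the \emph{reformulated} nonlinearity. A block $\Delta_{q}u_{h}$ costs $2^{-q\alpha}$, a block $\Delta_{q}u_{3}$ costs $2^{-q\beta}$, and the single derivative costs $2^{q}$, so the three families of terms scale as $2^{q(1-3\alpha)}$, $2^{q(1-2\alpha-\beta)}$ and $2^{q(1-\alpha-2\beta)}$. Under $\alpha\ge\tfrac13$ the first two exponents are nonpositive automatically, and the last is the binding one: it is $\le0$ iff $\alpha+2\beta\ge1$, which is exactly $\beta\ge\f{1-\alpha}{2}$. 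The viscous dissipation $\int_{0}^{T}\norm{\nabla u}_{L^{2}}^{2}\,ds<\infty$ is what upgrades the Euler requirement $B^{\cdot}_{3,c(\mathbb{N})}$ to the endpoint space $B^{\cdot}_{3,\infty}$, by dominating the high-frequency portion of $\Pi_{N}$ through $\sum_{q>N}2^{2q}\norm{\Delta_{q}u}_{L^{2}}^{2}\to0$.

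The hard part is the critical scaling $\beta=\f{1-\alpha}{2}$, and above all the corner $\alpha=\beta=\tfrac13$, where all three dyadic exponents vanish and the naive sum $\sum_{q}2^{q\cdot0}$ diverges. There $B_{3,\infty}$ gives only a uniform-in-$q$ bound on each block, so the series must be closed with the dissipation: I would split $\Pi_{N}$ at an intermediate frequency $M$, bound the low part $q\le M$ by the anisotropic Besov norms (losing a factor $M$) and the high part $q>M$ by the dissipation, and optimise $M=M(N)\to\infty$. The genuine difficulty is the time integrability: three Besov factors already saturate $L^{1}(0,T)$ through $L^{3}\times L^{3}\times L^{3}$, leaving no room for the dissipation, which only lives in $L^{2}(0,T)$. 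Resolving this forces one to spend the Leray--Hopf bound $u\in L^{\infty}L^{2}$ on one factor and interpolate it against $B^{\alpha}_{3,\infty}$, $B^{\beta}_{3,\infty}$ and $H^{1}$ simultaneously; making this anisotropic interpolation quantitative---with two distinct regularities $\alpha\ne\beta$ balanced against the isotropic dissipation in both space and time---is the delicate point on which the whole argument turns.
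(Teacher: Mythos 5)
Your architecture for conditions (1)--(2) --- regularize, use the divergence-free substitution $\partial_{3}u_{3}=-(\partial_{1}u_{1}+\partial_{2}u_{2})$ so every term carries a horizontal factor, integrate the horizontal derivative by parts in the doubly vertical terms, and close by anisotropic H\"older together with a commutator-convergence statement --- is essentially the paper's (the paper truncates in frequency with $S_{N}$ rather than mollifying, and its Lemma \ref{lem2.1} supplies $S_{N}(fg)-S_{N}fS_{N}g\to0$ in mixed norms, including the $L^{\infty}$ endpoints via a Constantin--E--Titi identity). Those two cases are fine in outline. The problems are in (3) and (4).

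For condition (3) there is a genuine gap. The embedding $\nabla u_{h}\in L^{p}L^{q}\Rightarrow u_{h}\in L^{p}L^{3q/(3-q)}$ only covers $\f32\le q<3$; for $3\le q<\infty$ the mixed term $II=\int[S_{N}(u_{h}u_{3})-S_{N}u_{h}S_{N}u_{3}]\,\partial_{h}S_{N}u_{3}$ is critically scaled (it is the borderline ``$\alpha=1$, $\beta=0$'' instance of \eqref{1.4}), and your plan of letting ``$\nabla u\in L^{2}L^{2}$ supply the derivative factor'' fails on time integrability: the commutator gains $\varepsilon\,\|\nabla u_{h}\|_{L^{q}}\|u_{3}\|_{L^{2q/(q-2)}}$, and with $\nabla u_{h}\in L^{p}L^{q}$ ($\f2p+\f3q=2$) and $u_{3}$ on the Leray--Hopf line in $L^{2q/3}L^{2q/(q-2)}$ the temporal exponents already sum to $1$, leaving nothing for the $L^{2}$-in-time dissipation ($\f1p+\f{3}{2q}+\f12=\f32$). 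The paper instead balances an $O(2^{-N})$ commutator bound (mean value theorem on $u_{h}$) against the $O(2^{N})$ Bernstein cost of $\partial_{h}S_{N}u_{3}$ in $L^{2q/(q-1)}$; this only yields boundedness, and convergence to zero is then extracted from Lemma \ref{lem2.6new}, which shows Leray--Hopf solutions satisfy $u_{3}\in L^{4q/3}(0,T;B^{0}_{2q/(q-1),1})$, so that the localized convolution $\Gamma^{0}\ast d_{3}(N)\to0$ and dominated convergence finishes. That $\ell^{1}$-summability input is the essential idea missing from your sketch.

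For condition (4) your proposal is incomplete by your own admission: the frequency-splitting flux argument runs into exactly the obstruction you flag ($L^{3}\times L^{3}\times L^{3}$ saturates $L^{1}(0,T)$, leaving no room for the dissipation), and you do not resolve it. The paper's actual argument is entirely different and much shorter: since $\nabla u\in L^{2}L^{2}$, for a.e.\ $t$ one has $u_{h}(t)\in H^{1}\approx B^{1}_{2,2}\subseteq B^{1/2}_{3,2}\subseteq B^{\alpha}_{3,2}$ whenever $\alpha\le\f12$, and membership in $B^{\alpha}_{3,2}$ forces $2^{j\alpha}\|\Delta_{j}u_{h}\|_{L^{3}}\to0$, i.e.\ $u_{h}(t)\in B^{\alpha}_{3,c(\mathbb{N})}$; dominated convergence with the assumed $L^{3}B^{\alpha}_{3,\infty}$ bound as majorant upgrades this to $u_{h}\in L^{3}(0,T;B^{\alpha}_{3,c(\mathbb{N})})$, and Theorem \ref{the1.1} then applies verbatim. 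This is also the only place the hypothesis $\alpha\le\f12$ enters --- a restriction your sketch neither uses nor explains, which signals that the intended mechanism was missed.
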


 \begin{remark}
 A special case of   $u_{h}\in L^{p_{1}}(0,T;L^{q_{1}}(\mathbb{R}^{3}))$       and  $u_{3}\in L^{p_{2}}(0,T;L^{q_{2}}(\mathbb{R}^{3}))$ with  $p_{1}=p_{2}=q_{1}=q_{2}=4$
 reduces to the   famous Lions's
 energy balance class.
 The first result enables the  horizontal part and vertical direction of velocity to have different integrability.
 \end{remark}

 \begin{remark}
The first anisotropic criterion in this theorem is very complicated at first sight, thus it seems that it allows us to  relax Shinbrot energy equality  class \eqref{Shinb} and    \eqref{tby} due to  Beirao da Veiga-Yang (see the following Corollary \ref{coro1.5}).
 \end{remark}

\begin{remark}
The sufficient condition based on
 horizontal   components of velocity for energy balance law lies in the well-known Serrin class. Notice that
the full regularity for Leray-Hopf weak solutions satisfying  $u_h\in L^{\infty}(0, T; L^3(\mathbb{R}^{3}))$ is still open, hence,
 a  potential interesting case in terms of horizontal velocity  for energy equality  is $u_h\in L^{\infty}(0, T; L^3(\mathbb{R}^{3}))$.
\end{remark}

It is well-known that the Lebesgue dominated convergence theorem breaks down for Lebesgue space $L^{\infty}$. To overcome this difficulty, we invoke the Constantin-E-Titi type identity to deal with  the limiting cases $   L^{\infty}(0, T; L^3(\mathbb{R}^{3}))$. For
future work, it would be interesting to prove energy equality class $u_h\in L^{2}(0, T; L^\infty(\mathbb{R}^{3}))$.  When one considers  energy equality condition via $\nabla u_{h}$, it seems that
the term
$$\int_{0}^{T}\int_{\mathbb{R}^{3}} [S_{N}(u_{h}u_{3} )-S_{N} u_{h}S_{N}u_{3} ]\partial_{h} S_{N}u_{3} dxds$$
corresponds to the borderline case of \eqref{1.4} for $\alpha=1$ and $\beta=0$, which brings more difficulty. Here our starting point is the deduction of
$u_{3}\in L^{p}(0,T;B^{0}_{q,1}(\mathbb{R}^{3}))  $
via  low-high frequency techniques (see Lemma \ref{lem2.6new}), which allows us to revisit the proof of Theorem \ref{the1.1} to
prove this energy balance criterion in terms of horizontal gradients. It seems that this observation is of independent interest.
Theorem \ref{the1.4}  has the following   consequence.
\begin{coro}\label{coro1.5}Let $u$ be a Leray-Hopf weak solution to the 3D Navier-Stokes equations \eqref{NS}
in the sense of distributions.
Then for any $0\leq t\leq T$, the energy   of weak solutions   is preserved provided that
\begin{enumerate}[(1)]
  \item $u_h \in L^{p }(0,T;L^{q }(\mathbb{R}^{3})),u_3 \in L^{\f{2p }{p -2}}(0,T;L^{\f{2q }{q -2}}(\mathbb{R}^{3})),$~for$ ~\f{2}{p }+
 \f{2}{q }=1~\text{with} ~q \geq 4$;
 \item $u_h \in L^{p }(0,T;L^{q }(\mathbb{R}^{3})),u_3 \in L^{\f{2p }{p -2}}(0,T;L^{\f{2q }{q -2}}(\mathbb{R}^{3}))$~for~$\f{1}{p }+
 \f{3}{q }=1~$with $~q \leq 4$;
 \item  for $i,j\in\{1,2,3\}$ and $i\neq j$, $u_i \in L^{p_{i}}(0,T;L^{q_{i}}(\mathbb{R}^{3}))  ~\text{with}~\f{2}{p_{i}}+
 \f{2}{q_{i}}=1~\text{for}~q_{i}\geq 4$ and $u_j \in L^{p_{j}}(0,T;L^{q_{j}}(\mathbb{R}^{3})),~\text{with}~\f{1}{p_{j}}+
 \f{3}{q_{j}}=1~\text{for}~q_{j}\leq 4$.
 \end{enumerate}
\end{coro}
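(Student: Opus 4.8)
The plan is to deduce all three items from the first criterion of Theorem~\ref{the1.4}, the input being the Leray--Hopf bounds $u\in L^{\infty}(0,T;L^2(\mathbb{R}^3))\cap L^2(0,T;\dot H^1(\mathbb{R}^3))$, the latter giving $u\in L^2(0,T;L^6(\mathbb{R}^3))$ by Sobolev embedding. For item (1) I would set $p_1=p,\ q_1=q,\ p_2=\frac{2p}{p-2},\ q_2=\frac{2q}{q-2}$, so that the conjugacy relations of Theorem~\ref{the1.4}(1) hold automatically: indeed $\frac{1}{p_1}+\frac{1}{p_2}=\frac{1}{p}+\frac{p-2}{2p}=\frac12$ and likewise $\frac{1}{q_1}+\frac{1}{q_2}=\frac12$. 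The only nontrivial point is to verify $u_h\in L^4(0,T;L^4(\mathbb{R}^3))$, which I obtain by interpolating the hypothesis $u_h\in L^pL^q$ against the Leray--Hopf bound $u_h\in L^{\infty}L^2$; the Shinbrot scaling $\frac2p+\frac2q=1$ (with $q\geq4$, hence $p\leq4$) is exactly what forces the interpolated exponents to land on $L^4L^4$. Item (2) is identical, except that the Beirao da Veiga--Yang scaling $\frac1p+\frac3q=1$ (with $q\leq4$, hence $p\geq4$) requires interpolating $u_h\in L^pL^q$ against $u_h\in L^2L^6$ rather than $L^\infty L^2$; again the scaling relation is precisely what produces $L^4L^4$. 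Once $u_h\in L^4L^4$ is in hand, Theorem~\ref{the1.4}(1) applies verbatim and yields \eqref{EENS}, so that (1) and (2) relax \eqref{Shinb} and \eqref{tby} respectively.

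For item (3) I would first invoke the invariance of the Navier--Stokes system under permutations (and reflections) of the spatial coordinates to relabel the axes so that the two controlled components $u_i,u_j$ become the horizontal pair $u_h=(u_1,u_2)$ and the uncontrolled component $u_k$ becomes the vertical one $u_3$. As in items (1) and (2), interpolation with the Leray--Hopf bounds upgrades the Shinbrot component and the Beirao da Veiga--Yang component to $L^4L^4$, so that $u_h\in L^4L^4$. However, $u_3=u_k$ is now controlled only through $L^\infty L^2\cap L^2\dot H^1$, which does not embed into $L^4(Q_T)$, where $Q_T:=(0,T)\times\mathbb{R}^3$; consequently one cannot simply quote Theorem~\ref{the1.4}(1), whose conjugacy would demand $u_3$ in the dual space of $L^4L^4$, i.e.\ again $L^4L^4$. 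Instead I would revisit the flux estimate of Theorem~\ref{the1.4}. Writing the nonlinear flux through the Littlewood--Paley truncations as $\sum_{m,n}\int_0^T\!\!\int_{\mathbb{R}^3} r_N(u_m,u_n)\,\partial_m S_N u_n\,dx\,dt$ with $r_N(f,g)=S_N(fg)-S_Nf\,S_Ng$, and using the divergence-free reformulation of the paper (which replaces $u_3\partial_3u_3$ by $-u_3(\partial_1u_1+\partial_2u_2)$), every summand carries at least one horizontal velocity factor. The purely horizontal summands are then controlled exactly as in Lions's argument, through $u_h\in L^4L^4$ together with $\nabla u\in L^2(Q_T)$.

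The hard part will be the summands carrying two vertical factors, namely the mixed terms $r_N(u_h,u_3)\,\partial_h S_N u_3$ and, above all, the term $-\,r_N(u_3,u_3)\,\partial_h S_N u_h$ coming from the reformulated $(3,3)$ entry. Since $u_3=u_k$ only lies in $L^\infty L^2\cap L^2\dot H^1$ and $u_3^2\notin L^2(Q_T)$, the naive bound $\|r_N(u_3,u_3)\|_{L^2}\to0$ is unavailable. The idea I would pursue is to exploit the extra spatial derivative carried by $u_3\in L^2\dot H^1$ inside the commutator: the Constantin--E--Titi commutator estimate gains a factor $\ell_N^{2\sigma}$ (with $\ell_N\sim N^{-1}$) from the two copies of $u_3$, measured in a fractional norm interpolated between $L^2$ and $\dot H^1$ with $\sigma>1/2$, which more than compensates the factor $\ell_N^{-1}$ produced by $\partial_h S_N u_h$, the latter paired with the integrability $u_h\in L^4L^4$ through Bernstein's inequality. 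The genuine difficulty is the bookkeeping in the time variable: $u_3$ enjoys only the limited time regularity of the Leray--Hopf class, so the spatial gain must be balanced against the Shinbrot ($u_i$) and Beirao da Veiga--Yang ($u_j$) time--space integrability so that the time integral is finite and the whole expression tends to $0$ as $N\to\infty$. I expect this balancing, rather than any single estimate, to be the crux; the roles of the two distinct horizontal integrabilities are precisely to absorb the two mixed vertical interactions $u_iu_3$ and $u_ju_3$ respectively.
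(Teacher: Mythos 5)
Your treatment of items (1) and (2) is correct and coincides with the paper's own argument: the paper proves precisely the two Gagliardo--Nirenberg interpolation inequalities \eqref{gn1} and \eqref{gn2} (interpolating $L^pL^q$ against $L^\infty L^2$ in the Shinbrot range $\frac2p+\frac2q=1$, $q\geq4$, and against $L^2\dot H^1\hookrightarrow L^2L^6$ in the Beirao da Veiga--Yang range $\frac1p+\frac3q=1$, $q\leq4$) to upgrade $u_h$ to $L^4(0,T;L^4(\mathbb{R}^3))$, and then invokes Theorem \ref{the1.4}(1) with $(p_1,q_1)=(p,q)$, $(p_2,q_2)=\bigl(\frac{2p}{p-2},\frac{2q}{q-2}\bigr)$, whose conjugacy relations you verify correctly.

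Item (3) is where your proposal has a genuine gap. You read the hypothesis as controlling only two of the three components, leaving $u_k$ with nothing beyond the Leray--Hopf bounds, and this forces you into the commutator analysis you sketch. The paper reads the hypothesis (as its examples and Figure \ref{figure4} in the introduction make explicit, where three points of region $I$ are assigned to $u_1,u_2,u_3$) as placing \emph{each} of the three components in one of the two classes; under that reading the proof is one line: \eqref{gn1} and \eqref{gn2} put every component, hence $u$ itself, in $L^4(0,T;L^4(\mathbb{R}^3))$, and Lions's classical criterion gives \eqref{EENS} with no relabelling of axes and no flux estimate. Under your literal two-component reading, the argument you outline does not close. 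The term you correctly single out, $\int_0^T\!\!\int_{\mathbb{R}^3} r_N(u_3,u_3)\,\partial_h S_N u_h\,dx\,dt$ with $u_3$ only in $L^\infty L^2\cap L^2\dot H^1$, is exactly the $\beta=0$ borderline that the paper's anisotropic criteria exclude: Theorem \ref{the1.4}(4) requires $\beta\geq\frac{1-\alpha}{2}>0$, while interpolating the Leray--Hopf bounds only yields something like $u_3\in L^3(0,T;B^{1/6}_{3,2}(\mathbb{R}^3))$, and $u_h\in L^4L^4$ carries no positive Besov regularity, so the balance $\alpha+2\beta\geq1$ is far out of reach. Your own text concedes this (``I expect this balancing \dots to be the crux''), so what you have for (3) is a plan rather than a proof. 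The repair is simply to adopt the all-three-components reading and reduce to Lions; note, conversely, that the stronger literal statement you tried to prove is also not established by the paper's own one-line argument, which likewise presupposes that every component has been upgraded to $L^4(0,T;L^4(\mathbb{R}^3))$.
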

We present some comments on Theorem \ref{the1.4} and  Corollary \ref{coro1.5}.
To illustrate our contribution, let  the horizontal and vertical axes be $\f1q$ and $\f1p$  on the coordinate plane,   respectively.
It is known that the region  of  validity of energy identity of Leray-Hopf weak solutions to the tri-dimensional Navier-Stokes equations  is $I$ in Figure \ref{figure1}-\ref{figure4}. Indeed, Shinbrot's,  Beirao da Veiga-Yang's results and H\"older inequality guarantee that the whole  region $I$ means
Lions's energy equality class  $ u \in L^{4}(0,T;L^{4}(\mathbb{R}^{3}))$ (see \eqref{gn1} and \eqref{gn2}).
  The first energy equality criteria in this corollary show  that Shinbrot's energy identity class $u\in L^{\f{8}{3}}(0,T;L^{  8 } (\mathbb{R}^{3}))$
can be replaced by
$u_h\in L^{\f{8}{3}}(0,T;L^{  8 }(\mathbb{R}^{3}) )$ and  $u_3\in L^{ 8}(0,T;L^{\f{8}{3} } (\mathbb{R}^{3}))$.
We would like to point out that the corresponding point of the space $
L^{ 8}(0,T;L^{\f{8}{3} } (\mathbb{R}^{3}))$ is not in region  $I$ and is closer to the natural energy $\f{1}{p}+
 \f{3}{q}=\f32$  of weak solutions than $L^{\f{8}{3}}(0,T;L^{  8 } (\mathbb{R}^{3}))$ in Figure \ref{figure2}.
 The second  energy equality sufficient condition  in  this corollary   asserts that  $u\in   L^{ 7}(0,T;L^{\f{7}{2}} (\mathbb{R}^{3}))$ as a special case of  \eqref{tby} becomes $u_h\in   L^{ 7}(0,T;L^{\f{7}{2} } (\mathbb{R}^{3}))$ and
$u_3\in   L^{ \f{14}{5}}(0,T;L^{\f{14}{3} } (\mathbb{R}^{3}))$. It should be pointed out that
$   L^{ \f{14}{5}}(0,T;L^{\f{14}{3} } (\mathbb{R}^{3}))$ satisfies $\f{1}{p }+
 \f{3}{q }=1$ but $q>4$ (see Figure \ref{figure2}).
%
 The same  scenario also occurs for all the points in region  $I$ via Theorem \ref{the1.4} in  Figure \ref{figure3}.
 Notice that the midpoint of  $(\f{1}{q},\f{1}{p})$ and $(\f{q-2}{2q},\f{p-2}{2p})$ is  always $(\f{1}{4},\f{1}{4})$, therefore, the region $I$ of  horizontal components  and $II$ of  vertical velocity  are symmetric with
 respect to the Lions's famous result in  Figure \ref{figure3}, where the region $II$ is determined  by the three lines $\f2p+\f3q=1,  \f{2}{p}+
 \f{2}{q}=1$, and $\f{1}{p}+
 \f{3}{q}=\f32$ in this figure. When the horizontal component of the velocity approaches the Serrin class, the vertical velocity is close to the natural energy of weak solutions (see Figure \ref{figure2} and \ref{figure3}).
Notice that the classical Shinbrot's and Beirao da Veiga-Yang's results \eqref{Shinb}-\eqref{tby} are
in terms of all components of velocity. Inspired by Corollary \ref{coro1.3}, the new ingredient of the third energy equality sufficient conditions in this corollary is to allow partial components of velocity to be in Shinbrot's class with different
integrability and the rest components to be in Beirao da Veiga-Yang's class.
 Two special cases of the third energy equality sufficient conditions (see Figure \ref{figure4}) read
\be\ba &u_1 \in L^{ \f52}(0,T;L^{10} (\mathbb{R}^{3})) ,
 u_2\in L^{ 3}(0,T;L^{6} (\mathbb{R}^{3})),  u_3 \in L^{ \f{10}{3}}(0,T;L^{5} (\mathbb{R}^{3}));\\
 &u_1 \in L^{ \f{14}{5}}(0,T;L^{7} (\mathbb{R}^{3})),
 u_2\in L^{\f{100}{19}}(0,T;L^{\f{100}{27} } (\mathbb{R}^{3})),  u_3 \in L^{ 7}(0,T;L^{\f72} (\mathbb{R}^{3})).
\ea\ee
Indeed, choosing arbitrary three points  $(\f{1}{q_{1}},\f{1}{p_{1}})$, $(\f{1}{q_{2}},\f{1}{p_{2}})$   and  $(\f{1}{q_{3}},\f{1}{p_{3}})$   in region I in Figure \ref{figure4}, we can immediately obtain a sufficient condition for weak solutions keeping energy conservation
\be\ba
u_1 \in L^{ p_{1}}(0,T;L^{q_{1}} (\mathbb{R}^{3})),
 u_2\in L^{ p_{2}}(0,T;L^{q_{2}} (\mathbb{R}^{3})),  u_3 \in L^{ p_{3}}(0,T;L^{q_{3}} (\mathbb{R}^{3})),
\ea\ee
 which is a generalization of Shinbrot's and  Beirao da Veiga-Yang's energy balance classes \eqref{Shinb}-\eqref{tby}.

\begin{figure}[htbp]
  \centering
  \begin{minipage}{0.48\textwidth}
    \centering

\begin{tikzpicture}[scale=9,>= stealth]
\draw[->] (0,0)--(0.7,0)node[right]{\tiny{$\f1q$}};
\draw[->](0,0)--(0,0.6)node[right]{\tiny{$\f1p$}};
\draw (0,0.5)--(0.33333,0)node[pos=0.5,below,sloped]{\tiny Serrin};
\draw (0,0.5)--(0.25,0.25)node[pos=0.5,above,sloped]{\tiny Shinbrot};
\draw (0.25,0.25)--(0.33333,0)  ;

\draw (0.5,0)--(0.166666,0.5)node[pos=0.7,above,sloped]{\tiny weak solutions};
\draw[fill=black] (0.25,0.25) circle(.002);
\node[right] at ( 0.001, 0.012) {\tiny{0}};
\draw[<-](0.2857,0.142857)--(0.35,0.142857)node[right]{\tiny Beirao da Veiga-Yang-Taniuchi};
\draw[<-](0.25,0.25)--(0.4,0.25)node[right]{\tiny Lions};
\node[below] at (0.5,0.01) {\tiny $( \infty,2)$};
\node[below] at (0.333333,0.01) {\tiny $( \infty,3)$};
\node[above] at (0.1666666,0.5) {\tiny $( 2,6)$};
\node[above] at (0,0.5) {\tiny $( 2,\infty)$};
\node[above] at (0.26,0.25) {\tiny$(4,4)$};
\node[above] at (0.22,0.22) {\tiny$I$};
\end{tikzpicture}
    \caption{classical results}
    \label{figure1}
  \end{minipage}
  \hfill
  \begin{minipage}{0.48\textwidth}
    \centering

\begin{tikzpicture}[scale=9,>= stealth]
\draw[->] (0,0)--(0.7,0)node[right]{\tiny{$\f1q$}};
\draw[->](0,0)--(0,0.6)node[right]{\tiny{$\f1p$}};
\draw (0,0.5)--(0.33333,0);
\draw (0,0.5)--(0.25,0.25);
\draw (0.25,0.25)--(0.33333,0)  ;

\draw (0.5,0)--(0.166666,0.5);
\draw[fill=black] (0.25,0.25) circle(.002);
\node[right] at ( 0.001, 0.012) {\tiny{0}};

 \draw[style=dashed] (0.125,0.375)--(0.48,0.495) ;
 \draw[style=dashed] ( 0.375,0.125)--(0.52,0.49);
 \node[below] at (0.5,0.53)  {\tiny $(u_h,u_3)$};
 \node[left] at (0.125,0.375)  {\tiny $(\f83,8)$};
 \node[right] at ( 0.375,0.125)  {\tiny $( 8,\f83)$};
\draw[fill=black] ( 0.375,0.125) circle(.002);
\draw[fill=black] ( 0.125,0.375) circle(.002);

\draw[style=dashed] (0.2857,0.142857)--(0.16,0.09) ;
\draw[style=dashed] ( 0.2143,0.357143)--(0.108,0.09);
\node[below] at (0.14,0.11)  {\tiny $( u_3,u_h)$};

\node[right] at (0.2857,0.142857)  {\tiny $(7,\f72)$};
\node[above] at ( 0.2143,0.357143)  {\tiny $(\f{14}{5},\f{14}{3})$};

\draw[fill=black] (0.2857,0.142857) circle(.002);
\draw[fill=black] ( 0.2143,0.357143)  circle(.002);

\draw[style=dashed] (0.25,0.2)--(0.58,0.2) ;
\draw[style=dashed] ( 0.25,0.3)--(0.64,0.21);
\draw[fill=black] (0.25,0.2) circle(.002);

\node[right] at (0.57,0.2)  {\tiny $(u_h,u_3)$};
\node[left] at (0.26,0.2)  {\tiny $(5,4)$};
\node[right] at ( 0.25,0.33333)  {\tiny $(\f{10}{3},4)$};
\draw[fill=black] ( 0.25,0.3) circle(.002);

\node[below] at (0.5,0.01) {\tiny $( \infty,2)$};
\node[below] at (0.333333,0.01) {\tiny $( \infty,3)$};
\node[above] at (0.1666666,0.5) {\tiny $( 2,6)$};
\node[above] at (0,0.5) {\tiny $( 2,\infty)$};
\node[right] at (0.244,0.25) {\tiny$(4,4)$};
\node[above] at (0.22,0.22) {\tiny$I$};
\end{tikzpicture}
    \caption{special case of new energy class}
    \label{figure2}
  \end{minipage}
\end{figure}
 \begin{figure}[h]
  \centering
  \begin{minipage}{0.48\textwidth}
    \centering

\begin{tikzpicture}[scale=9,>= stealth]
\fill[gray!30] (0.25,0.25) -- ( 0.5,0)-- ( 0.1666667,0.5)--(0.25,0.25);

\draw[->] (0,0)--(0.7,0)node[right]{\tiny{$\f1q$}};
\draw[->](0,0)--(0,0.6)node[right]{\tiny{$\f1p$}};
\draw (0,0.5)--(0.33333,0);
\draw (0,0.5)--(0.25,0.25);
\draw (0.25,0.25)--(0.33333,0)  ;
 \draw[style=dashed] (0.25,0.25)--(0.5,0);
 \draw[style=dashed] (0.25,0.25)--(0.166666,0.5);

\draw (0.5,0)--(0.166666,0.5);
\draw[fill=black] (0.25,0.25) circle(.002);
\node[left] at (0.21,0.22) {\tiny $(p,q)$};
\node[right] at (0.205,0.205) {\tiny $u_h$};
\draw[fill=black] (0.21,0.212) circle(.002);


\node[above] at (0.28,0.42) {\tiny $( u_h,u_3)$};

\draw[fill=black] (0.29,0.288) circle(.002);

\node[right] at (0.29,0.288) {\tiny $(\f{2p}{p-2},\f{2q}{q-2})$};
\node[left] at (0.3,0.29) {\tiny $u_3$};
\node[right] at ( 0.001, 0.012) {\tiny{0}};
\node[below] at (0.5,0.01) {\tiny $( \infty,2)$};
\node[below] at (0.333333,0.01) {\tiny $( \infty,3)$};
\node[above] at (0.1666666,0.48) {\tiny $( 2,6)$};
\node[above] at (0,0.48) {\tiny $( 2,\infty)$};
\node[right] at (0.244,0.25) {\tiny$(4,4)$};
\draw[fill=white] (0,0.5) circle(.004);
\node[above] at (0.25,0.13 ) {\tiny$I$};
\node[above] at (0.26,0.31) {\tiny$II$};
\end{tikzpicture}
    \caption{new anisotropic energy equality criteria}
    \label{figure3}
  \end{minipage}
  \hfill
  \begin{minipage}{0.48\textwidth}
    \centering

\begin{tikzpicture}[scale=9,>= stealth]
\draw[->] (0,0)--(0.7,0)node[right]{\tiny{$\f1q$}};
\draw[->](0,0)--(0,0.6)node[right]{\tiny{$\f1p$}};
\draw (0,0.5)--(0.33333,0);
\draw (0,0.5)--(0.25,0.25);
\draw (0.25,0.25)--(0.33333,0)  ;

\draw (0.5,0)--(0.166666,0.5);
\draw[fill=black] (0.25,0.25) circle(.002);
\node[right] at ( 0.001, 0.012) {\tiny{0}};
\node[below] at (0.5,0.01) {\tiny $( \infty,2)$};
\node[below] at (0.333333,0.01) {\tiny $( \infty,3)$};
\node[above] at (0.1666666,0.5) {\tiny $( 2,6)$};
\node[above] at (0,0.5) {\tiny $( 2,\infty)$};
\node[above] at (0.26,0.25) {\tiny$(4,4)$};

\draw[style=dashed] (0.1,0.4)--(0.05,0.15) ;

\draw[style=dashed] (0.1666667,0.333333)--(0.1 ,0.16) ;
\draw[style=dashed] (0.2,0.3)--(0.15,0.15) ;

\node[above] at (0.1 ,0.12) {\tiny$(u_1,u_2,u_3)$};

\draw[style=dashed] (0.1428571428,0.357142857)--(0.4,0.35) ;

\draw[style=dashed] (0.27,0.19)--(0.4537,0.3286) ;
\draw[style=dashed] (0.2857,0.142857)--(0.5,0.332);

\node[above] at (0.47,0.32) {\tiny$(u_1,u_2,u_3)$};
\node[above] at (0.22,0.22) {\tiny$I$};
\end{tikzpicture}
    \caption{anisotropic classical results}
    \label{figure4}
  \end{minipage}
\end{figure}

The rest of this  paper is organized as follows. In Section 2,
we present the notations and some basic  materials of Besov spaces.
Two critical lemmas for the proof of Theorem \ref{the1.4} are also given. Section 3 is devoted to the anisotropic criteria for energy conservation of weak solutions to inviscid fluids.
In Section 4, we consider anisotropic sufficient conditions for the energy equality of viscous fluids.

\section{Notations and  key auxiliary lemmas} \label{section2}

Throughout this paper, we will use the summation convention on repeated indices. $C$ will denote positive absolute constants which may be different from line to line unless otherwise stated in this paper.  For $p\in [1,\,\infty]$, the notation $L^{p}(0,\,T;X)$ stands for the set of measurable functions $f$ on the interval $(0,\,T)$ with values in $X$ and $\|f\|_{X}$ belonging to $L^{p}(0,\,T)$.

Denote by
$\mathcal{S}(\mathbb{R}^{n})$ the Schwartz space of rapidly
decreasing smooth functions, $\mathcal{S}'(\mathbb{R}^{n})$ the
space of tempered distributions,
$\mathcal{S}'(\mathbb{R}^{n})/\mathcal{P}(\mathbb{R}^{n})$ the
quotient space of tempered distributions which modulo polynomials.
We use $\mathcal{F}f$ or $\widehat{f}$ to denote the Fourier transform of a tempered distribution $f$, and $\mathcal{F}^{-1}f$ represents the inverse Fourier transform of $f$.
   To define Besov  spaces, we need the following dyadic unity partition
(see e.g. \cite{[BCD]}). Choose two nonnegative radial
functions $\varrho$, $\varphi\in C^{\infty}(\mathbb{R}^{n})$
supported respectively in the ball $\{\xi\in
\mathbb{R}^{n}:|\xi|\leq \frac{4}{3} \}$ and the shell $\{\xi\in
\mathbb{R}^{n}: \frac{3}{4}\leq |\xi|\leq
  \frac{8}{3} \}$ such that
\begin{equation*}
 \varrho(\xi)+\sum_{j\geq 0}\varphi(2^{-j}\xi)=1, \quad
 \forall\xi\in\mathbb{R}^{n}; \qquad
 \sum_{j\in \mathbb{Z}}\varphi(2^{-j}\xi)=1, \quad \forall\xi\neq 0.
\end{equation*}
It follows that $\varphi(\xi)=\varrho(\xi/2)-\varrho(\xi)$ for all $\xi\in
\mathbb{R}^{n}$. Denote $h=\mathcal{F}^{-1} \varphi $ and $\tilde{h}=\mathcal{F}^{-1}\varrho$, then inhomogeneous dyadic blocks  $\Delta_{j}$ are defined by
$$
\Delta_{j} u:=0 ~~ \text{if} ~~ j \leq-2, ~~ \Delta_{-1} u:=\varrho(D) u =\int_{\mathbb{R}^n}\tilde{h}(y)u(x-y)dy,$$
$$\text{and}~~\Delta_{j} u:=\varphi\left(2^{-j} D\right) u=2^{jn}\int_{\mathbb{R}^n}h(2^{j}y)u(x-y)dy  ~~\text{if}~~ j \geq 0.
$$
And the inhomogeneous low-frequency cut-off operator $S_j$ for any $j \geq 0$ is defined by
$$
S_{j}u:= \sum_{k\leq j-1}\Delta_{k}u=\varrho(2^{-j}D)u.$$
The homogeneous dyadic blocks $\dot{\Delta}_{j}$ are  defined  for every $j\in\mathbb{Z}$ by
\begin{equation*}
  \dot{\Delta}_{j}u:= \varphi(2^{-j}D)u.
\end{equation*}
Then for $-\infty <\alpha<\infty$ and $1\leq p,q\leq \infty,$ the homogeneous Besov semi-norm $ \|f\|_{\dot{B}^{\alpha}_{p, q}(\mathbb{R}^{n})}$ of $f\in \mathcal{S}'(\mathbb{R}^{n})/\mathcal{P}(\mathbb{R}^{n})$ is given by
\begin{equation*}
	\begin{aligned}
  \norm{f}_{\dot{B}^{\alpha}_{p, q}(\mathbb{R}^{n})}:=\left\{\begin{array}{lll}\left(\sum_{j\in \mathbb{Z}}2^{jq\alpha}\norm{\dot{\Delta}_{j} f} _{L^p(\mathbb{R}^{n})}^q\right)^{1/q},~~\text{if}\ q\in [1,\infty),\\
  	\sup_{j\in \mathbb{Z}}2^{j\alpha}\norm {\dot{\Delta}_{j}f} _{L^p(\mathbb{R}^{n})},~~~~~~~~~~\text{if}~q=\infty.\end{array}\right.
\end{aligned}\end{equation*}
Moreover, we define the inhomogeneous Besov norm $\norm{f}_{B^\alpha_{p,q}(\mathbb{R}^{n})}$ of $f\in \mathcal{S}^{'}(\mathbb{R}^{n})$ as
\begin{equation*}
	\begin{aligned}
  \norm{f}_{B^{\alpha}_{p, q}(\mathbb{R}^{n})}:=\left\{\begin{array}{lll}\left(\sum_{j\in \mathbb{Z}}2^{jq\alpha}\norm{\Delta_{j} f} _{L^p(\mathbb{R}^{n})}^q\right)^{1/q},~~\text{if}\ q\in [1,\infty),\\
  	\sup_{j\in \mathbb{Z}}2^{j\alpha}\norm {\Delta_{j}f} _{L^p(\mathbb{R}^{n})},~~~~~~~~~\text{if}~q=\infty.\end{array}\right.
\end{aligned}\end{equation*}
Then we denote the homogeneous Besov space by
\begin{equation*}
  \dot{B}^{\alpha}_{p,q}(\mathbb{R}^{n}):=\Big\{f\in\mathcal{S}'(\mathbb{R}^{n})/\mathcal{P}(\mathbb{R}^{n})\B|
  \norm{f}_{\dot{B}^{\alpha}_{p,q}(\mathbb{R}^{n})}<\infty  \Big\},
\end{equation*} and the inhomogeneous Besov space by
\begin{equation*}
  B^{\alpha}_{p,q}(\mathbb{R}^{n}):=\Big\{f\in\mathcal{S}'(\mathbb{R}^{n})\B|\norm{f}_{B^{\alpha}_{p,q}(\mathbb{R}^{n})}<\infty  \Big\}.
\end{equation*}
In the spirit of  \cite{[CCFS]}, we define $ {B}^\alpha _{p,c(\mathbb{N})}$ to be the class of all tempered distributions $f$ for which
\begin{equation}\label{2.1}
\norm{f}_{ {B}^\alpha _{p,\infty}}<\infty~ \text{and}~ 	\lim_{j\rightarrow \infty} 2^{j\alpha}\norm{ {\Delta}_j f}_{L^p}=0,~~\text{for any}~1\leq p\leq \infty.
\end{equation}
In addition, one can also define the homogeneous Besov space with positive indices  in terms of finite differences.
  For the convenience of readers, we give the detail on periodic domain.   For $1\leq q\leq \infty$ and $0<\alpha<1$, the homogeneous Besov space $\dot{B}^{\alpha}_{q,\infty}(\mathbb{T}^{n})$ is the space of functions $f$ on the $d$ dimensional torus $\mathbb{T}^{n}=[0,1]^{n}$ equipped with the semi-norm
 \be\label{besov1} \|f\|_{\dot{B}^{\alpha}_{q,\infty}(\mathbb{T}^{n})}= \sup_{y\in \mathbb{T}^{n}} |y|^{-\alpha}\B\| f(x+y)-f(x)\B\|
 _{L_{x}^{q}(\mathbb{T}^{n}\cap(\mathbb{T}^{n}-y))} <\infty,\ee
where $\mathbb{T}^{n}-y=\{x-y|x\in \mathbb{T}^{n}\},$ and the inhomogeneous Besov space $B^{\alpha}_{q,\infty}(\mathbb{T}^{n})$ is the set of functions $f\in L^{q}(\mathbb{T}^{n})$ equipped with the norm
 $$\ba
  \|f\|_{B^{\alpha}_{q,\infty}(\mathbb{T}^{n})}= \|f\|_{L^{q}(\mathbb{T}^{n})}
  +\|f\|_{\dot{B}^{\alpha}_{q,\infty}(\mathbb{T}^{n})}<\infty.\ea$$
A function $f$ belongs to
the Besov-VMO space  $L^p(0,T;\underline{B}^{\alpha}_{q,VMO}(\mathbb{T}^n))$  if it satisfies
$$\|f\|_{L^p(0,T;L^q(\mathbb{T}^n))}<\infty,$$
and
$$\ba
&\lim_{\varepsilon\rightarrow0}\f{1}{\varepsilon^{\alpha}}\left(\int_0^T\B[\int_{\mathbb{T}^n} \fbxo|f(x)-f(y)|^{q}dydx \B]^{\f{p}{q}}dt\right)^{\f1p}\\
=&\lim_{\varepsilon\rightarrow0}\f{1}{\varepsilon^{\alpha}}\left(\int_0^T\B[\int_{\mathbb{T}^n} \fbxoo|f(x)-f(x-y)|^{q}dydx \B]^{\f{p}{q}}dt\right)^{\f{1}{p}}=0.
\ea$$

{\bf Mollifier kernel:} Let $\eta_{\varepsilon}:\mathbb{R}^{n}\rightarrow \mathbb{R}$ be a standard mollifier, i.e. $\eta(x)=C_0e^{-\frac{1}{1-|x|^2}}$ for $|x|<1$ and $\eta(x)=0$ for $|x|\geq 1$, where $C_0$ is a constant such that $\int_{\mathbb{R}^n}\eta (x) dx=1$. For $\varepsilon>0$, we define the rescaled mollifier $\eta_{\varepsilon}(x)=\frac{1}{\varepsilon^n}\eta(\frac{x}{\varepsilon})$ and for  any function $f\in L^1_{loc}(\mathbb{R}^n)$, its mollified version is defined as
$$f^\varepsilon(x)=(f*\eta_{\varepsilon})(x)=\int_{\mathbb{R}^n}f(x-y)\eta_{\varepsilon}(y)dy,\ \ x\in \mathbb{R}^n.$$
Next, we collect  some lemmas  which will be used in the present paper.
\begin{lemma}(\cite{[Chae],[Yu]})\label{lem2.2}
 Suppose that  $f\in L^p(0,T; {B}^\alpha_{q,\infty}(\mathbb{T}^{n}))$, $g\in L^p(0,T;{B}^\beta_{q,c(\mathbb{N})}(\mathbb{T}^{n}))$ with $\alpha, \beta\in (0,1)$,  $ p,q\in [1,\infty]$,   then there holds that,  for any $k\in \mathbb{N}^+$, as $\varepsilon\rightarrow0,$
 \begin{enumerate}[(1)]
 \item $ \|f^{\varepsilon} -f \|_{L^{p}(0,T;L^{q}(\mathbb{T}^{n}))}\leq \text{O}(\varepsilon^{\alpha})\|f\|_{L^p(0,T; {B}^\alpha_{q,\infty}(\mathbb{T}^n))} $;
   \item   $ \|\nabla^{k}f^{\varepsilon}  \|_{L^{p}(0,T;L^{q}(\mathbb{T}^{n}))}\leq \text{O}(\varepsilon^{\alpha-k})\|f\|_{L^p(0,T; {B}^\alpha_{q,\infty}(\mathbb{T}^{n}))} $;
       \item $ \|g^{\varepsilon} -g \|_{L^{p}(0,T;L^{q}(\mathbb{T}^{n}))}\leq \text{o}(\varepsilon^{\beta})\|g\|_{L^p(0,T; {B}^\beta_{q,c(\mathbb{N})}(\mathbb{T}^{n}))} $;
   \item   $ \|\nabla^{k}g^{\varepsilon}  \|_{L^{p}(0,T;L^{q}(\mathbb{T}^{n}))}\leq \text{o}(\varepsilon^{\beta-k})\|g
       \|_{L^p(0,T; {B}^\beta_{q,c(\mathbb{N})}(\mathbb{T}^{n}))}$.
 \end{enumerate}
\end{lemma}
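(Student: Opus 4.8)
The four estimates fall into two pairs. Items (1) and (2) are $O$-type bounds that I would obtain directly from the finite-difference characterization of $B^{\alpha}_{q,\infty}$ recorded in \eqref{besov1}, whereas (3) and (4) are $o$-type bounds whose improvement over (1)--(2) comes precisely from the extra decay $\lim_{j\to\infty}2^{j\beta}\|\Delta_{j}g\|_{L^{q}}=0$ built into the class $B^{\beta}_{q,c(\mathbb{N})}$ (see \eqref{2.1}); for these I would run a Littlewood--Paley decomposition. Throughout, $\eta_{\varepsilon}$ is supported in $\{|y|\le\varepsilon\}$, so for small $\varepsilon$ the periodic mollification on $\mathbb{T}^{n}$ causes no wraparound and all the Euclidean manipulations apply.

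For (1), I would write $f^{\varepsilon}(x)-f(x)=\int_{\mathbb{R}^{n}}[f(x-y)-f(x)]\eta_{\varepsilon}(y)\,dy$ and apply Minkowski's inequality in $L^{q}$, reducing matters to the translation estimate $\|f(\cdot-y)-f\|_{L^{q}}\le |y|^{\alpha}\|f\|_{\dot B^{\alpha}_{q,\infty}}$ supplied by \eqref{besov1}. Since $\int\eta_{\varepsilon}=1$ and $\eta_{\varepsilon}$ is supported in $\{|y|\le\varepsilon\}$, this produces the factor $\int_{|y|\le\varepsilon}|y|^{\alpha}\eta_{\varepsilon}(y)\,dy\le C\varepsilon^{\alpha}$. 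For (2) the key observation is that $\int_{\mathbb{R}^{n}}\nabla^{k}\eta_{\varepsilon}\,dy=0$ for $k\ge1$, so one may again insert the increment and write $\nabla^{k}f^{\varepsilon}(x)=\int_{\mathbb{R}^{n}}[f(x-y)-f(x)]\nabla^{k}\eta_{\varepsilon}(y)\,dy$; the same translation estimate together with the rescaling identity $\int_{\mathbb{R}^{n}}|y|^{\alpha}|\nabla^{k}\eta_{\varepsilon}(y)|\,dy=\varepsilon^{\alpha-k}\int_{\mathbb{R}^{n}}|z|^{\alpha}|\nabla^{k}\eta(z)|\,dz$ yields the $\varepsilon^{\alpha-k}$ loss. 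In both cases one arrives at a pointwise-in-time bound by $\|f(t)\|_{B^{\alpha}_{q,\infty}}$ and then takes the $L^{p}(0,T)$ norm.

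For (3) and (4) I would decompose $g=\sum_{j}\Delta_{j}g$ and estimate each block in two complementary ways. For low frequencies $2^{j}\lesssim\varepsilon^{-1}$ I use the smoothness of $\Delta_{j}g$ with Bernstein's inequality, which costs a factor $\varepsilon\,2^{j}$ for $\|(\Delta_{j}g)*\eta_{\varepsilon}-\Delta_{j}g\|_{L^{q}}$ and $2^{jk}$ for $\|(\Delta_{j}g)*\nabla^{k}\eta_{\varepsilon}\|_{L^{q}}$; for high frequencies $2^{j}\gtrsim\varepsilon^{-1}$ I use only $\|\eta_{\varepsilon}\|_{L^{1}}=1$, respectively $\|\nabla^{k}\eta_{\varepsilon}\|_{L^{1}}=C\varepsilon^{-k}$. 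Writing $a_{j}(t):=2^{j\beta}\|\Delta_{j}g(t)\|_{L^{q}}$ and summing after taking the $L^{p}(0,T)$ norm via Minkowski, the high-frequency tail contributes $C\varepsilon^{\beta}\sup_{j>N}\|a_{j}\|_{L^{p}(0,T)}$ (resp.\ $C\varepsilon^{\beta-k}\sup_{j>N}\|a_{j}\|_{L^{p}(0,T)}$) with $2^{N}\sim\varepsilon^{-1}$, which is already $o(\varepsilon^{\beta})$ (resp.\ $o(\varepsilon^{\beta-k})$) by the $c(\mathbb{N})$ decay. The low-frequency sum $C\varepsilon\sum_{j\le N}2^{j(1-\beta)}\|a_{j}\|_{L^{p}(0,T)}$ (resp.\ $C\sum_{j\le N}2^{j(k-\beta)}\|a_{j}\|_{L^{p}(0,T)}$) I would split at an intermediate scale $M$: the block $M<j\le N$ is dominated by $\delta_{M}:=\sup_{j>M}\|a_{j}\|_{L^{p}(0,T)}$ times a geometric sum converging to the top scale, giving $C\delta_{M}\varepsilon^{\beta}$ (resp.\ $C\delta_{M}\varepsilon^{\beta-k}$), while the fixed block $j\le M$ contributes a constant multiple of $\varepsilon$ (resp.\ a constant), which is $o(\varepsilon^{\beta})$ (resp.\ $o(\varepsilon^{\beta-k})$) because $\beta<1\le k$.

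\textbf{Main obstacle.} The genuine content is the passage from $O$ to $o$ in (3)--(4): it forces the low--high frequency split tuned to $2^{N}\sim\varepsilon^{-1}$ together with an $\varepsilon/3$-type argument in which one first chooses $M$ large to make $\delta_{M}$ small and only afterwards sends $\varepsilon\to0$. The exponent bookkeeping---verifying that $1-\beta>0$ and $k-\beta>0$ make the geometric sums accumulate at the top scale---is where care is needed; by contrast the finite-difference arguments for (1)--(2) and the final $L^{p}(0,T)$ integration are routine.
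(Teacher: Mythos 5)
The paper does not prove this lemma: it is imported verbatim from \cite{[Chae],[Yu]} with no argument given, so there is no in-paper proof to compare against. Your proposal is a correct and essentially self-contained proof along the standard lines those references use: the moment/cancellation properties of $\eta_{\varepsilon}$ ($\int\eta_{\varepsilon}=1$, $\int\nabla^{k}\eta_{\varepsilon}=0$) combined with the finite-difference seminorm \eqref{besov1} give (1)--(2), and the low--high Littlewood--Paley split at $2^{N}\sim\varepsilon^{-1}$ with the extra decay \eqref{2.1} gives the upgrade from $O$ to $o$ in (3)--(4); your exponent bookkeeping ($1-\beta>0$, $k-\beta>0$, $\varepsilon=o(\varepsilon^{\beta})$) is right. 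The one point you should make explicit is the passage from the pointwise-in-time decay $2^{j\beta}\|\Delta_{j}g(t)\|_{L^{q}}\to0$ in the definition of $B^{\beta}_{q,c(\mathbb{N})}$ to the statement $\sup_{j>N}\|a_{j}\|_{L^{p}(0,T)}\to0$ that your tail estimate actually uses: for $p<\infty$ this follows by dominated convergence with majorant $\|g(t)\|_{B^{\beta}_{q,\infty}}\in L^{p}(0,T)$, while for $p=\infty$ it requires interpreting the hypothesis as uniform-in-time decay. It would also be worth one line confirming that the finite-difference and dyadic characterizations you switch between are equivalent for $0<\alpha,\beta<1$ on $\mathbb{T}^{n}$, since the paper defines the torus norms only via \eqref{besov1}.
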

\begin{lemma}(\cite{[YWL]})	\label{lem2.3}
	Assume that $0<\alpha,\beta<1$, $1\leq p,q,p_{1},p_{2}\leq\infty$ and $\frac{1}{p}=\frac{1}{p_1}+\frac{1}{p_2}$.
Then as $\varepsilon\to 0$, there holds
	\begin{align} \label{cet}
		\|(fg)^{\varepsilon}- f^{\varepsilon}g^{\varepsilon}\|_{L^p(0,T;L^q(\mathbb{T}^3))} \leq  \text{o}(\varepsilon^{\alpha+\beta}),
	\end{align}
provided that one of the following two conditions is satisfied,
\begin{enumerate}[(1)]
\item  $f\in L^{p_1}(0,T;\underline{B}^{\alpha}_{q_1,VMO} (\mathbb{T}^3) )$, $g\in L^{p_2}(0,T;\underline{B}^{\beta}_{q_2,VMO}(\mathbb{T}^3)  )$, $1\leq q_{1},q_{2}\leq\infty$, $\frac{1}{q}=\frac{1}{q_1}+\frac{1}{q_2}$;
 \item  $f\in L^{p_1}(0,T;\underline{B}^{\alpha}_{q_{1},VMO}(\mathbb{T}^3))$, $g\in L^{p_2}(0,T;\dot{B}^{\beta}_{q_{2},\infty} (\mathbb{T}^3))$, $1\leq q_{1},q_{2}\leq\infty$, $\frac{1}{q}=\frac{1}{q_1}+\frac{1}{q_2},q_{2}\geq \f{q_{1}}{q_{1}-1}$ and $p_{2}\geq\f{q_{1}}{q_{1}-1}$.
 \end{enumerate}\end{lemma}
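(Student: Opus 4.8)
The plan is to reduce the commutator $(fg)^\varepsilon-f^\varepsilon g^\varepsilon$ to increments of $f$ and $g$ via the Constantin--E--Titi decomposition, and then to recognize those increments as exactly the $\varepsilon$-averaged moduli of continuity controlling $\underline{B}^\alpha_{q,VMO}$ and $\dot{B}^\beta_{q,\infty}$.

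First I would record the commutator identity. Using $f^\varepsilon(x)=\int\eta_\varepsilon(y)f(x-y)\,dy$ together with $\int\eta_\varepsilon=1$, and expanding the product $(f(x-y)-f(x))(g(x-y)-g(x))$ under the integral, one checks directly that
\[
(fg)^\varepsilon-f^\varepsilon g^\varepsilon=r_\varepsilon(f,g)-(f-f^\varepsilon)(g-g^\varepsilon),
\]
where $r_\varepsilon(f,g)(x):=\int \eta_\varepsilon(y)\,\big(f(x-y)-f(x)\big)\big(g(x-y)-g(x)\big)\,dy$. This isolates the two model quantities to be estimated.

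For fixed $t$ I would bound both pieces in $L^q_x$. For $r_\varepsilon$, Minkowski's integral inequality followed by H\"older with $\f1q=\f1{q_1}+\f1{q_2}$ gives
\[
\norm{r_\varepsilon(f,g)}_{L^q_x}\le \int \eta_\varepsilon(y)\,\norm{f(\cdot-y)-f}_{L^{q_1}_x}\,\norm{g(\cdot-y)-g}_{L^{q_2}_x}\,dy.
\]
For the second piece, writing $f-f^\varepsilon=\int\eta_\varepsilon(y)(f-f(\cdot-y))\,dy$ and applying Minkowski shows that $\norm{f-f^\varepsilon}_{L^{q_1}_x}$ and $\norm{g-g^\varepsilon}_{L^{q_2}_x}$ are dominated by the same $y$-averaged increments over $\{|y|\le\varepsilon\}$. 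Taking $L^p_t$ norms and using H\"older in time with $\f1p=\f1{p_1}+\f1{p_2}$ separates the estimate into the $L^{p_1}_tL^{q_1}_x$-averaged modulus of $f$ against the $L^{p_2}_tL^{q_2}_x$-averaged modulus of $g$; here one must organize the $y$-integration so that what appears is precisely the ball-averaged modulus from the definition of $\underline{B}_{VMO}$, rather than a cruder pointwise-in-$y$ supremum. Under condition (1), both averaged moduli are $o(\varepsilon^\alpha)$ and $o(\varepsilon^\beta)$, so their product is $o(\varepsilon^{\alpha+\beta})$, which is the assertion.

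The main difficulty is condition (2), where $g$ lies only in the homogeneous space $\dot{B}^\beta_{q_2,\infty}$ and thus supplies merely $O(\varepsilon^\beta)$ for its increments, with no VMO gain; the entire $o$ must therefore be extracted from $f$. This is where the constraints $q_2\ge \f{q_1}{q_1-1}$ and $p_2\ge\f{q_1}{q_1-1}$ enter: they make the H\"older pairing of $f$'s (VMO, hence $o(\varepsilon^\alpha)$) increment against $g$'s ($\dot{B}^\beta_{q_2,\infty}$, hence $O(\varepsilon^\beta)$) increment admissible, so that the product $o(\varepsilon^\alpha)\cdot O(\varepsilon^\beta)$ still yields $o(\varepsilon^{\alpha+\beta})$. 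I expect verifying the exponent bookkeeping in this case---ensuring the averaged-modulus quantities combine consistently in both the spatial and temporal integrations under those inequalities---to be the technical crux, the remaining estimates being the routine Constantin--E--Titi bounds above.
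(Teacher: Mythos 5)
The paper itself offers no proof of Lemma \ref{lem2.3}: it is imported verbatim from \cite{[YWL]}, so there is no internal argument to compare against. Your decomposition is the standard Constantin--E--Titi mollifier identity, which is exactly the device this paper uses elsewhere (compare \eqref{cetilem2.3} and \eqref{ceti} for the $S_N$ analogue), so the route is certainly the intended one, and the treatment of case (1) is correct in outline. One step you should make explicit there: the VMO modulus in the paper's definition is an $L^{q_j}$-average in $y$ over $B_\varepsilon(0)$, whereas your bound for $r_\varepsilon$ produces an $L^1$-in-$y$ average of the product $\|f(\cdot-y)-f\|_{L^{q_1}}\|g(\cdot-y)-g\|_{L^{q_2}}$; passing from the latter to the product of the two averaged moduli requires H\"older in $y$ with exponents $q_1$, $q_2$ and total $\frac{1}{q_1}+\frac{1}{q_2}\leq 1$, which holds precisely because $q\geq 1$. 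The same Jensen step handles $\|f-f^{\varepsilon}\|_{L^{q_1}}$.

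The genuine soft spot is case (2), which you explicitly defer as ``the technical crux.'' Your stated mechanism --- that $q_2\geq \frac{q_1}{q_1-1}$ and $p_2\geq\frac{q_1}{q_1-1}$ ``make the H\"older pairing admissible'' --- is not a verifiable step as written, and is arguably not where the argument lives. On the torus the seminorm \eqref{besov1} gives the uniform-in-$y$ bound $\|g(\cdot-y)-g\|_{L^{q_2}}\leq |y|^{\beta}\|g\|_{\dot{B}^{\beta}_{q_2,\infty}}\leq \varepsilon^{\beta}\|g\|_{\dot{B}^{\beta}_{q_2,\infty}}$ for $|y|\leq\varepsilon$, so the $g$-increment can be pulled out of the $y$-integral pointwise, leaving only the $L^1$-in-$y$ average of the $f$-increment, which Jensen dominates by the $L^{q_1}$-averaged VMO modulus; a single H\"older in time with $\frac1p=\frac1{p_1}+\frac1{p_2}$ then yields $o(\varepsilon^{\alpha})\cdot O(\varepsilon^{\beta})$. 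If you argue this way you should either say so and note that the constraints on $q_2,p_2$ are not invoked by your proof (they may be artifacts of the argument in \cite{[YWL]}), or identify concretely the step at which they are needed. As it stands, the part of the proof you label as the crux is exactly the part that is missing.
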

Next, we prove two lemmas for the study of energy equality of weak solutions in the Navier-Stokes equations.
\begin{lemma}\label{lem2.1}
	Let $ p, q, p_2, q_2\in[1,+\infty)$ and $p_1, q_1\in[1,+\infty]$  with
$\frac{1}{p}=\frac{1}{p_1}+\frac{1}{p_2},\frac{1}{q}=\frac{1}{q_1}+\frac{1}{q_2} $. Assume $f\in L^{p_1}(0,T;L^{q_1}(\mathbb{R}^3)) $ and $g\in
L^{p_2}(0,T;L^{q_2}(\mathbb{R}^3))$, then   there holds
	\begin{equation}\label{a4}
	\lim_{N\rightarrow\infty}\|S_{N}(f g)  -S_{N}f  S_{N}g \|_{L^p(0,T;L^q(\mathbb{R}^3))}= 0.
	\end{equation}

\end{lemma}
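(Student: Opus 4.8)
The plan is to treat $S_N(fg)-S_Nf\,S_Ng$ as a Friedrichs/Constantin--E--Titi commutator and to exploit the asymmetry of the hypotheses: the smallness as $N\to\infty$ must be supplied entirely by $g$, whose exponents $p_2,q_2$ are finite, while $f$ (whose exponents may be infinite) enters only through uniform norm bounds. First I would record that $S_N=\varrho(2^{-N}D)$ is convolution with the kernel $K_N(z)=2^{3N}\tilde h(2^N z)$, where $\tilde h=\mathcal F^{-1}\varrho\in\mathcal S(\mathbb R^3)\subset L^1$, that its multiplier at zero frequency is $\varrho(0)=1$, i.e.\ $\int_{\mathbb R^3}K_N\,dz=1$, and that $\int_{\mathbb R^3}|K_N|\,dz=\|\tilde h\|_{L^1}$ is independent of $N$; thus $\{K_N\}$ is an approximate identity of integral one.

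The algebraic heart is the pointwise identity
\[
S_N(fg)-S_Nf\,S_Ng=R_N(f,g)-(S_Nf-f)(S_Ng-g),
\]
where
\[
R_N(f,g)(x)=\int_{\mathbb R^3}K_N(z)\big(f(x-z)-f(x)\big)\big(g(x-z)-g(x)\big)\,dz,
\]
which follows by writing $\tau_z(fg)-fg=(\tau_zf-f)(\tau_zg-g)+f(\tau_zg-g)+(\tau_zf-f)g$, integrating against $K_N$ (using $\int K_N=1$), and subtracting the expansion of $S_Nf\,S_Ng$. It then suffices to show that each of the two terms on the right tends to $0$ in $L^p(0,T;L^q)$.

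For the remainder $R_N$, Minkowski's integral inequality followed by Hölder in space ($\tfrac1q=\tfrac1{q_1}+\tfrac1{q_2}$) gives, for a.e.\ $t$, the bound $\|R_N(t)\|_{L^q}\le\int|K_N(z)|\,\|\tau_zf(t)-f(t)\|_{L^{q_1}}\,\|\tau_zg(t)-g(t)\|_{L^{q_2}}\,dz$. Substituting $w=2^Nz$ turns this into an integral against $|\tilde h(w)|\,dw$ whose integrand, for fixed $t$, converges to $0$ pointwise in $w$ as $N\to\infty$: indeed $\|\tau_{2^{-N}w}g(t)-g(t)\|_{L^{q_2}}\to0$ by continuity of translation in $L^{q_2}$ (here $q_2<\infty$ is essential), while $\|\tau_{2^{-N}w}f(t)-f(t)\|_{L^{q_1}}\le 2\|f(t)\|_{L^{q_1}}$ stays bounded (valid even when $q_1=\infty$). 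Dominated convergence in $w$, with majorant $4\|f(t)\|_{L^{q_1}}\|g(t)\|_{L^{q_2}}|\tilde h(w)|$, yields $\|R_N(t)\|_{L^q}\to0$ for a.e.\ $t$, and the uniform bound $\|R_N(t)\|_{L^q}\le C\|f(t)\|_{L^{q_1}}\|g(t)\|_{L^{q_2}}$ provides a time-integrable majorant since $\tfrac{p}{p_1}+\tfrac{p}{p_2}=1$ lets Hölder in time give $\int_0^T\|f\|_{L^{q_1}}^p\|g\|_{L^{q_2}}^p\,dt\le\|f\|_{L^{p_1}L^{q_1}}^p\|g\|_{L^{p_2}L^{q_2}}^p<\infty$. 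A second application of dominated convergence, now in $t$, gives $\|R_N\|_{L^pL^q}\to0$. The term $(S_Nf-f)(S_Ng-g)$ is handled identically: Hölder in space bounds its $L^q$-norm by $\|S_Nf-f\|_{L^{q_1}}\,\|S_Ng-g\|_{L^{q_2}}\le 2\|f(t)\|_{L^{q_1}}\|S_Ng(t)-g(t)\|_{L^{q_2}}$, which $\to0$ for a.e.\ $t$ because $S_Ng(t)\to g(t)$ in $L^{q_2}$ (again $q_2<\infty$), with the same time-integrable majorant.

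The one point requiring care, and the main obstacle, is precisely the possible infinitude of $p_1,q_1$, which forbids any appeal to convergence of $S_Nf$ to $f$ (the dominated convergence theorem fails in $L^\infty$, as noted in the introduction). The argument circumvents this by never mollifying away the roughness of $f$: every occurrence of $f$ enters only through the translation-invariant bound $\|\tau_zf-f\|_{L^{q_1}}\le2\|f\|_{L^{q_1}}$, so that all genuine smallness is extracted from $g$ via translation continuity and $S_Ng\to g$ in the finite space $L^{q_2}$. This is the structural reason the hypotheses demand $p_2,q_2<\infty$ while tolerating $p_1,q_1=\infty$.
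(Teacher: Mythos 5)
Your proof is correct and follows essentially the same route as the paper: the same Constantin--E--Titi commutator identity $S_N(fg)-S_NfS_Ng=R_N(f,g)-(f-S_Nf)(g-S_Ng)$, Minkowski and H\"older, with all smallness extracted from $g$ via translation continuity and $S_Ng\to g$ in the finite-exponent space while $f$ enters only through uniform norm bounds. The only (immaterial) difference is organizational: you argue pointwise in $t$ and then apply dominated convergence in time, whereas the paper works directly with the mixed norm $L^{p_2}(0,T;L^{q_2})$.
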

\begin{proof}
The proof of this lemma   is folk for the case that $ p, q, p_1, q_1, p_2, q_2\in[1,+\infty)$. It suffices to notice that
$$S_{N}(fg) -S_{N}f S_{N}g=
S_{N}(fg) -fg+fg-  f S_{N}g+ f S_{N}g - S_{N}f S_{N}g.$$
For case $p_1, q_1\in[1,+\infty]$, we will apply Constantin-E-Titi type  identity to prove this lemma. Indeed, in view of  the fact that $2^{ nN}\int_{\mathbb{R}^n}\tilde{h}(2^{N}y)dy=\mathcal{F}(\tilde{h})(0)=\varrho(0)=1,$ we notice  that
\be\ba\label{cetilem2.3}
&S_{N}(fg) -S_{N}f S_{N}g\\
=&  2^{ 3N}\int_{\mathbb{R}^3}\tilde{h}(2^{N}y)[f(x-y)-f(x)]
[g(x-y)-g(x)]dy-(f-S_{N}f)(g-S_{N}g).
\ea\ee
In view of changing of variable, we reformulate  \eqref{cetilem2.3} as
 \be\ba\label{ceti2}
&S_{N}(fg) -S_{N}f S_{N}g\\
=&   \int_{\mathbb{R}^3}\tilde{h}(z)\B[f(x-\f{z}{2^{N}})-f(x)\B]
\B[g(x-\f{z}{2^{N}})-g(x)\B]dz-(f-S_{N}f)(g-S_{N}g).
\ea\ee
It follows from the H\"older  inequality and triangle inequality
  that
\be\ba\label{2.8}
&\|
S_{N}(fg) -S_{N}f S_{N}g\|_{L^{p }(0,T;L^{q}(\mathbb{R}^{3}))}\\
\leq&
  \int_{\mathbb{R}^3}\tilde{h}(z)\B\|f(x-\f{z}{2^{N}})-f(x)\B\|_{L^{p_{1}}
  (0,T;L^{q_{1}}(\mathbb{R}^{3}))}
\B\|g(x-\f{z}{2^{N}})-g(x)\B\|_{L^{p_{2}}
  (0,T;L^{q_{2}}(\mathbb{R}^{3}))}dz\\&+\|f-S_{N}f\|_{L^{p_{1}}
  (0,T;L^{q_{1}}(\mathbb{R}^{3}))}\| g-S_{N}g \|_{L^{p_{2}}
  (0,T;L^{q_{2}}(\mathbb{R}^{3}))}\\
  \leq&
  C\|f\|_{L^{p_{1}}
  (0,T;L^{q_{1}}(\mathbb{R}^{3}))}\int_{\mathbb{R}^3}\tilde{h}(z)
\B\|g(x-\f{z}{2^{N}})-g(x)\B \|_{L^{p_{2}}
  (0,T;L^{q_{2}}(\mathbb{R}^{3}))}dz\\&+C\|f\|_{L^{p_{1}}
  (0,T;L^{q_{1}}(\mathbb{R}^{3}))}\|g-S_{N}g\|_{L^{p_{2}}
  (0,T;L^{q_{2}}(\mathbb{R}^{3}))}.
 \ea\ee
Since $p_2,q_2\in[1, \infty)$,   we deduce from the following approximations of the identity that
\be\label{2.9}
\lim_{N\rightarrow\infty}\|g-S_{N}g\|_{L^{p_{2}}
  (0,T;L^{q_{2}}(\mathbb{R}^{3}))}=0.
  \ee
The strong-continuity of translation operators on Lebesgue spaces allows us to write,
for
 $1\leq p_{2},q_{2}<\infty$,
\be\label{2.10}
 \lim_{N\rightarrow\infty}\B\|g(x-\f{z}{2^{N}})-g(x)\B\|_{L^{p_{2}}
  (0,T;L^{q_{2}}(\mathbb{R}^{3}))}=0.
  \ee
We conclude by the Lebesgue dominated convergence theorem and \eqref{2.10}  that
 \be\label{2.11}
 \lim_{N\rightarrow\infty} \int_{\mathbb{R}^3}\tilde{h}(z)
\B\|g(x-\f{z}{2^{N}})-g(x)\B \|_{L^{p_{2}}
  (0,T;L^{q_{2}}(\mathbb{R}^{3}))}dz=0.
   \ee
Substituting \eqref{2.9} and \eqref{2.11} into \eqref{2.8}, we finish the proof of this lemma.
\end{proof}
It is well-known that Leray-Hopf weak solutions of the tridimensional  Navier-Stokes equations satisfy $u\in L^{p}(0,T;L^{q}(\mathbb{R}^{3}))$ for $\f{2}{p}+\f{3}{q}=\f32 $  by Gagliardo-Nirenberg inequality. By means of low-high frequency
techniques, we will show that Leray-Hopf weak solutions are in
$L^{p}(0,T;B^{0}_{q,1}(\mathbb{R}^{3}))  $ for $\f{2}{p}+\f{3}{q}=\f32 $ with $2<q<6$, which plays an important role in the proof of Theorem \ref{the1.4}.
\begin{lemma}\label{lem2.6new}
Suppose that
$f\in L^{\infty}(0,T;L^{2}(\mathbb{R}^{3}))  $  and $\nabla f\in L^{2}(0,T;L^{2}(\mathbb{R}^{3}))  $.
Then there holds, for $2<q<6$,
$f\in L^{p}(0,T;B^{0}_{q,1}(\mathbb{R}^{3}))  $ and
$f\in L^{p}(0,T;\dot{B}^{0}_{q,1}(\mathbb{R}^{3}))   $ with $\f{2}{p}+\f{3}{q}=\f32.$
\end{lemma}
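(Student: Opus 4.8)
The plan is to prove a Gagliardo--Nirenberg-type inequality on the Besov scale, namely the pointwise-in-time bound
$$\|f\|_{\dot{B}^{0}_{q,1}(\mathbb{R}^3)}\leq C\,\|f\|_{L^2(\mathbb{R}^3)}^{\frac{3}{q}-\frac12}\,\|\nabla f\|_{L^2(\mathbb{R}^3)}^{\frac32-\frac{3}{q}},\qquad 2<q<6,$$
and then to integrate it in time using the scaling relation $\frac{2}{p}+\frac{3}{q}=\frac32$. The engine is a low--high frequency splitting of the Littlewood--Paley sum at a threshold $N\in\mathbb{Z}$ which I will optimize at the end.

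First I would record the two Bernstein inequalities for the frequency-localized pieces: since $q\geq 2$, one has $\|\dot{\Delta}_j f\|_{L^q}\lesssim 2^{3j(\frac12-\frac1q)}\|\dot{\Delta}_j f\|_{L^2}$, and for the high modes the reverse estimate $\|\dot{\Delta}_j f\|_{L^2}\lesssim 2^{-j}\|\dot{\Delta}_j\nabla f\|_{L^2}$; for the low modes I simply use boundedness of the multiplier, $\|\dot{\Delta}_j f\|_{L^2}\lesssim\|f\|_{L^2}$. Splitting the homogeneous Besov sum at $j=N$, the low part is dominated by a geometric series with positive ratio $2^{3(\frac12-\frac1q)}$, giving $\sum_{j\leq N}\|\dot{\Delta}_j f\|_{L^q}\lesssim 2^{3N(\frac12-\frac1q)}\|f\|_{L^2}$ (here it matters that $\frac12-\frac1q>0$, so the tail as $j\to-\infty$ converges). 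The high part is handled by Cauchy--Schwarz in $j$ together with the almost-orthogonality identity $\sum_{j}\|\dot{\Delta}_j\nabla f\|_{L^2}^2\approx\|\nabla f\|_{L^2}^2$; since $\frac12-\frac{3}{q}<0$ for $q<6$, the geometric factor gives $\sum_{j>N}\|\dot{\Delta}_j f\|_{L^q}\lesssim 2^{N(\frac12-\frac{3}{q})}\|\nabla f\|_{L^2}$.

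Adding the two contributions yields $\|f\|_{\dot{B}^0_{q,1}}\lesssim 2^{N\theta_1}\|f\|_{L^2}+2^{-N\theta_2}\|\nabla f\|_{L^2}$ with $\theta_1=\frac32-\frac{3}{q}>0$ and $\theta_2=\frac{3}{q}-\frac12>0$; the crucial arithmetic fact $\theta_1+\theta_2=1$ permits the choice $2^N=\|\nabla f\|_{L^2}/\|f\|_{L^2}$, which lands exactly on the displayed interpolation inequality. Raising to the power $p=2/\theta_1$ and integrating over $(0,T)$, the factor $\|f\|_{L^2}^{\theta_2 p}$ is absorbed by $f\in L^\infty(0,T;L^2)$, while $\theta_1 p=2$ turns the remaining factor into $\|\nabla f\|_{L^2}^2\in L^1(0,T)$; since $\frac{2}{p}=\theta_1$ is precisely the relation $\frac{2}{p}+\frac{3}{q}=\frac32$, this gives $f\in L^p(0,T;\dot{B}^0_{q,1})$. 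For the inhomogeneous statement I additionally estimate the single low-frequency block by Bernstein on a ball, $\|\Delta_{-1}f\|_{L^q}\lesssim\|\Delta_{-1}f\|_{L^2}\lesssim\|f\|_{L^2}$, which belongs to $L^\infty(0,T)\subset L^p(0,T)$ on the finite interval, and observe that $\Delta_j=\dot{\Delta}_j$ for $j\geq 0$ gives $\sum_{j\geq 0}\|\Delta_j f\|_{L^q}\leq\|f\|_{\dot{B}^0_{q,1}}$; hence $f\in L^p(0,T;B^0_{q,1})$ follows immediately.

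The main obstacle---and the entire point of working with the summability index $1$ rather than the effortless $L^q\simeq\dot{B}^0_{q,2}$---is securing the $\ell^1$ summation over dyadic frequencies while simultaneously keeping the sharp exponent in time. A crude bound of the type $\|f\|_{L^2}+\|\nabla f\|_{L^2}$ would forfeit the optimization and only reach the endpoint $q=6,\ p=2$. It is the balancing of the two geometric tails through $\theta_1+\theta_2=1$, feeding into the exponent matching $\theta_1 p=2$, that at once produces the frequency summability and the correct integrability in $t$; verifying the convergence of the $j\to-\infty$ tail (which holds exactly because $\frac12-\frac1q>0$) is the only additional point requiring care.
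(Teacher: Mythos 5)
Your proof is correct and follows essentially the same route as the paper's: a low--high frequency splitting of the Besov sum at a threshold $N$, Bernstein's inequality on the low modes, a derivative gain on the high modes, and optimization of $N$ to produce the interpolation inequality $\|f\|_{B^{0}_{q,1}}\lesssim \|f\|_{L^{2}}^{\frac{3}{q}-\frac12}\|\nabla f\|_{L^{2}}^{\frac32-\frac{3}{q}}$, which is then raised to the power $p=\frac{4q}{3q-6}$ and integrated in time. The only cosmetic difference is that you control the high-frequency tail by Cauchy--Schwarz in $j$ together with almost-orthogonality, whereas the paper uses $L^{2}$--$L^{6}$ interpolation, the Sobolev embedding, and a geometric sum against the uniform bound $2^{j}\|\Delta_{j}f\|_{L^{2}}\lesssim\|\nabla f\|_{L^{2}}$; both yield the same exponent $2^{N(\frac12-\frac{3}{q})}$.
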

\begin{proof}
According to the definition of Besov spaces, we write
\be\ba\label{2.13}
\|f\|_{B^{0}_{q,1}}=\sum^{N-1}_{j=-1} \|\Delta_{j}f\|_{L^{q}(\mathbb{R}^{3})}
+\sum^{\infty}_{j=N} \|\Delta_{j}f\|_{L^{q}(\mathbb{R}^{3})}
\ea\ee
By Bernstein inequality, we see that
 \be\ba\label{2.14}
\|\Delta_{j}f\|_{L^{q}}\leq C 2^{j[3(\f12-\f1q)]}\|\Delta_{j}f\|_{L^{2}(\mathbb{R}^{3})}\leq C 2^{j[3(\f12-\f1q)]}\| f\|_{L^{2}(\mathbb{R}^{3})}
\ea\ee
It follows from interpolation and Sobolev embedding theorem that
 \be\ba\label{2.15}
\|\Delta_{j}f\|_{L^{q}(\mathbb{R}^{3})}\leq & \|\Delta_{j}f\|_{L^{2}(\mathbb{R}^{3})}^{\f{6-q}{2q}}\|\Delta_{j}f\|^{\f{3q-6}{2q}}_{L^{6}(\mathbb{R}^{3})}\\
\leq & C \|\Delta_{j}f\|_{L^{2}(\mathbb{R}^{3})}^{\f{6-q}{2q}}\|\nabla\Delta_{j}f\|^{\f{3q-6}{2q}}_{L^{2}(\mathbb{R}^{3})}\\
\leq &C 2^{ j[\f{(3q-6)}{2q}-1]}2^{j}\|\Delta_{j}f\|_{L^{2}(\mathbb{R}^{3})}\\
\leq &C 2^{ j[\f{(3q-6)}{2q}-1]} \|\nabla f\|_{L^{2}(\mathbb{R}^{3})},
\ea\ee
where the Bernstein inequality and $N>-1$ was used.

Plugging \eqref{2.14} and \eqref{2.15} into \eqref{2.13}, we conclude by
the  straightforward  calculation that
\be\ba\label{2.16new}
\|f\|_{B^{0}_{q,1}}\leq& C\| f\|_{L^{2}(\mathbb{R}^{3})}\sum^{N-1}_{j=-1}  2^{j[3(\f12-\f1q)]}
+C \|\nabla f\|_{L^{2}(\mathbb{R}^{3})}\sum^{\infty}_{j=N}2^{\f{j(q-6)}{2q}}
\\
\leq &C   2^{N[3(\f12-\f1q)]}\| f\|_{L^{2}(\mathbb{R}^{3})}
+C  2^{\f{N(q-6)}{2q}}\|\nabla f\|_{L^{2}(\mathbb{R}^{3})}
\ea\ee
where we have used $2<q<6$.

Before going further, we write
$$
2^{N[3(\f12-\f1q)]}\| f\|_{L^{2}(\mathbb{R}^{3})}
\approx  2^{\f{N(q-6)}{2q}}\|\nabla f\|_{L^{2}(\mathbb{R}^{3})},$$
which enables us to rewrite \label{2.16} as
\be\ba
\|f\|_{B^{0}_{q,1}}\leq C\| f\|_{L^{2}(\mathbb{R}^{3})}^{\f{6-q}{2q}}\|\nabla f\|^{\f{3q-6}{2q}}_{L^{2}(\mathbb{R}^{3})}.
\ea\ee
By a time integration, we arrive at
\be\ba
\int_{0}^{T}\|f\|^{ \f{4q}{3q-6 }}_{B^{0}_{q,1}}dt\leq C\| f\|_{L^{\infty}(0,T; L^{2}(\mathbb{R}^{3}))}^{\f{6-q}{2q}}\int_{0}^{T}\|\nabla f\|^{2}_{L^{2}(\mathbb{R}^{3})}
dt.\ea\ee
This completes the proof of this lemma.
\end{proof}

 \section{Energy conservation   of weak solutions of ideal fluids}

This section is concerned with the energy conservation of the Euler equations \eqref{Euler}. Making use of divergence-free condition, we shall  revisit how much regularities are required for   weak solutions to preserve energy in the Euler equations and establish an energy conservation class allowing one component  of velocity to be in the largest space $B_{3, \infty}^{\f13}$. Firstly, we deal with the whole space case via Littlewood-Paley theory as follows.
\subsection{Whole space case}
\begin{proof}[Proof of Theorem \ref{the1.1}]
Applying the inhomogeneous Littlewood-Paley operator $S_{N}$ to the Euler equations,
we get
$$
S_{N}u_{t}  + S_{N}(u\cdot\ti
u ) +\nabla S_{N}\Pi=0,  ~~
 \Div S_{N}u=0.$$
It follows from the energy estimate, divergence-free condition and
 integration by parts that
 \be\ba
 \f12\|S_{N}u(T)\|_{L^{2}(\mathbb{R}^{3})}^{2} = \f12\|S_{N}u_{0}\|_{L^{2}(\mathbb{R}^{3})}^{2}+\sum_{ij}^{3}\int_{0}^{T}\int_{\mathbb{R}^{3}}(S_{N}(u_{i}u_{j} )\partial_{i} S_{N}u_{j})dxds.
\ea\ee
We compute
\be\ba\label{3.2}
&\sum_{ij}^{3} \int_{\mathbb{R}^{3}}(S_{N}(u_{i}u_{j} )\partial_{i} S_{N}u_{j})dx \\
=&\sum_{ i}^{2}\sum_{ j}^{3} \int_{\mathbb{R}^{3}}(S_{N}(u_{i}u_{j} )\partial_{i} S_{N}u_{j})dx + \sum_{ j}^{3} \int_{\mathbb{R}^{3}}(S_{N}(u_{3}u_{j} )\partial_{3} S_{N}u_{j})dx
\\
=& \sum_{ i}^{2}\sum_{ j}^{2} \int_{\mathbb{R}^{3}}(S_{N}(u_{i}u_{j} )\partial_{i} S_{N}u_{j})dx +\sum_{ id}^{2} \int_{\mathbb{R}^{3}}(S_{N}(u_{i}u_{3} )\partial_{i} S_{N}u_{3})dxds\\&+ \sum_{ j}^{2} \int_{\mathbb{R}^{3}}(S_{N}(u_{3}u_{j} )\partial_{3} S_{N}u_{j})dx +  \int_{\mathbb{R}^{3}}(S_{N}(u_{3}u_{3} )\partial_{3} S_{N}u_{3})dx \\
=&I+II+III+IV.
\ea\ee
In view of the incompressible condition, we see that
$$
\sum_{i,j=1}^{3}\int_{\mathbb{R}^{3}} S_{N}u_{j} \partial_{j}S_{N} u_{i}
S_{N}u_{i} dx=0,
$$
which allows us to write
 \be\ba
 I=  \int_{\mathbb{R}^{3}} [S_{N}(u_{h}u_{h} )-S_{N} u_{h}S_{N}u_{h} ]\partial_{h} S_{N}u_{h} dx, \\
 II=   \int_{\mathbb{R}^{3}} [S_{N}(u_{h}u_{3} )-S_{N} u_{h}S_{N}u_{3} ]\partial_{h} S_{N}u_{3} dx, \\
 III=  \int_{\mathbb{R}^{3}} [S_{N}(u_{3}u_{h} )-S_{N} u_{3}S_{N} u_{h} ]\partial_{3} S_{N}u_{h} dx,
\ea\ee
and
 \be\ba\label{3.4}
IV=&- \int_{\mathbb{R}^{3}} [S_{N}(u_{3}u_{3} )-S_{N} u_{3}S_{N}u_{3}  ] (\partial_{1} S_{N}u_{1}+\partial_{2}S_{N}u_{2})dx \\
=&- \int_{\mathbb{R}^{3}} [S_{N}(u_{3}u_{3} )-S_{N} u_{3}S_{N}u_{3}  ] \partial_{1} S_{N}u_{1} dx \\&-\int_{\mathbb{R}^{3}} [S_{N}(u_{3}u_{3} )-S_{N} u_{3}S_{N}u_{3}  ]  \partial_{2}S_{N}u_{2} dx.
\ea\ee
As a consequence, we have
\be\ba\label{3.5}
 \f12\|S_{N}u(T)\|_{L^{2}(\mathbb{R}^{3})}^{2}  =&\f12\|S_{N}u_{0}\|_{L^{2}(\mathbb{R}^{3})}^{2}+\int_{0}^{T}(I+II+III+IV)ds.
\ea\ee
Therefore, we conclude by the H\"older inequality
  that
\begin{equation}\label{key03}
	\begin{aligned}
	  |I |
=&\B| \int_{\mathbb{R}^{3}}\big[S_{N}(u_{h}u_{h}) -S_{N}u_{h} S_{N}u_{h} \big]\partial_{h} S_{N}u_{h}dx \B|\\
\leq	& \|S_{N}(u_{h}u_{h})  -S_{N}u_{h}S_{N}u_{h}\|_{ L^{\f{3}{2}} (\mathbb{R}^{3})}  \|\partial_{h}S_{N}u_{h}\|_{ L^{3}  (\mathbb{R}^{3})}.
\end{aligned}\end{equation}
According to the fact that $2^{ 3N}\int_{\mathbb{R}^3}\tilde{h}(2^{N}y)dy=\mathcal{F}(\tilde{h})(0)=\varrho(0)=1,$ we obtain the following Constantin-E-Titi type identity
\be\ba\label{ceti}
&S_{N}(u_{h}u_{h}) -S_{N}u_{h} S_{N}u_{h}\\
=&  2^{ 3N}\int_{\mathbb{R}^3}\tilde{h}(2^{N}y)[u_{h}(x-y)-u_{h}(x)]
[u_{h}(x-y)-u_{h}(x)]dy-(u_{h}-S_{N}u_{h})(u_{h}-S_{N}u_{h}).
\ea\ee
By virtue of  the Minkowski inequality, we arrive at
$$\ba\label{key}
 &\|S_{N}(u_{h}u_{h})  -S_{N}u_{h}S_{N}u_{h}\|_{  L^{\f{3}{2}}  (\mathbb{R}^{3})}
 \\\leq& 2^{ 3N}\int_{\mathbb{R}^3}|\tilde{h}(2^{N}y)|\| u_{h}(\,\cdot-y)-u_{h}(\cdot) \|_{ L^{3}  (\mathbb{R}^{3})}
 \| u_{h}(\,\cdot-y)-u_{h}(\cdot) \|_{ L^{3} (\mathbb{R}^{3}) }dy
\\&+\| u_{h}-S_{N}u_{h} \|_{ L^{3}  (\mathbb{R}^{3})} \| u_{h}-S_{N}u_{h}\|_{L^{3} (\mathbb{R}^{3})}\\
:=&\,I_{1}+I_{2}.
\ea$$
The Newton-Leibniz formula together with the Minkowski inequality and the Bernstein inequality ensures that
\begin{align}
&\|u_{h}(\,\cdot-y)-u_{h}(\cdot)\|_{ L^{3} (\mathbb{R}^{3})}\nonumber\\
\leq& C
\sum_{k< N}\|\Delta_{k}u_{h} (\,\cdot-y)-\Delta_{k}u_{h} (\cdot)\|_{L^{3} (\mathbb{R}^{3})} +\sum_{k\geq N}\|\Delta_{k}u_{h} (\,\cdot-y)-\Delta_{k}u_{h}(\cdot)\|_{L^{3} (\mathbb{R}^{3})}\nonumber\\
\leq&C\sum_{k< N}\B\| -\int_{0}^{1}y\cdot\nabla{\Delta}_{k}u_{h}(\,\cdot-\vartheta y)d\vartheta \B\|_{L^{3} (\mathbb{R}^{3})}+\sum_{k\geq N}\| {\Delta}_{k}u_{h}\|_{L^{3} (\mathbb{R}^{3})}\nonumber\\
\leq&C\sum_{k< N} \int_{0}^{1}|y|\B\|\nabla{\Delta}_{k}u_{h}(\,\cdot-\vartheta y)\B\|_{L^{3} (\mathbb{R}^{3})}d\vartheta +\sum_{k\geq N}\| {\Delta}_{k}u_{h}\|_{L^{3} (\mathbb{R}^{3})}\nonumber\\
=&C \sum_{k< N} |y| \B\|\nabla{\Delta}_{k}u_{h}\B\|_{L^{3} (\mathbb{R}^{3})} +\sum_{k\geq N}\| {\Delta}_{k}u_{h}\| _{L^{3} (\mathbb{R}^{3})} \nonumber\\
 \leq&C \sum_{k< N}2^{ k}|y| \| {\Delta}_{k}u_{h}\| _{L^{3} (\mathbb{R}^{3})}+\sum_{k\geq N}\| {\Delta}_{k}u_{h}\| _{L^{3} (\mathbb{R}^{3})} \nonumber\\
\leq&C  2^{ N(1-\alpha)}|y| \sum_{k< N}2^{- (N-k)(1-\alpha)}2^{ k\alpha}\| {\Delta}_{k}u_{h}\|_{L^{3} (\mathbb{R}^{3})} +2^{- \alpha N}\sum_{k\geq N} 2^{ (N-k)\alpha} 2^{ k\alpha}\| {\Delta}_{k}u_{h}\| _{L^{3} (\mathbb{R}^{3})},\label{0e3.3}
\end{align}
where $\alpha\in(0,1)$ is a constant to be determined later.

Before going further, in the spirit of  \cite{[CCFS]}, we set the following localized kernel
\be\label{K1}\Gamma_{1}(j)=\left\{\begin{aligned}
	&2^{j \alpha },~~~~~~~~~\text{if}~j\leq0;\\
	& 2^{-(1-\alpha)j},~~\text{if}~j>0,
\end{aligned}\right.
\ee
and  we denote $ {d}_{hj}=2^{j\alpha}\| {\Delta}_{j}u_{h}\|_{L^{3} (\mathbb{R}^{2})}$.

As a consequence, we get
$$\ba
\| u_{h}(x-y)-u_{h}(x) \|_{ L^{3} (\mathbb{R}^{2}) }\leq& C\left[ 2^{N(1-\alpha)}|y| +2^{  -\alpha  N}\right]\left(\Gamma_{1}\ast {d}_{hj}\right)(N)\\
\leq & C( 2^{N}|y| +1)2^{ -\alpha N}\left(\Gamma_{1}\ast  {d}_{hj}\right)(N).
 \ea$$

Since
$$ \sup_{N\in\mathbb{Z}} 2^{ 3N}\int_{\mathbb{R}^3}  |\tilde{h}(2^{N}y)|(2^{N}|y| +1)^{2}dy<\infty,$$

Hence, we deduce from  \eqref{0e3.3} and  \eqref{K1} that
$$I_{1}\leq C 2^{ -2\alpha N}\left(\Gamma_{1}\ast {d}_{hj}\right)^{2}(N).$$

On the other hand, we notice that
$$\ba
\| u_{h}-{S}_{N}u_{h} \|_{ L^{3}  (\mathbb{R}^{3})}\leq& C   \sum_{k\geq N }\| \Delta_{k} u_{h}  \|_{L^{3}(\mathbb{R}^{3})}
\\ \leq
&C
2^{- \alpha N}\sum_{k\geq N} 2^{ (N-k)\alpha} 2^{ k\alpha}\| {\Delta}_{k}u_{h}\| _{L^{3} (\mathbb{R}^{3})} \\ \leq
&C   2^{ - \alpha  N}\left(\Gamma_{1}\ast   {d}_{hj} \right)(N) .
\ea$$
As a consequence, we have
$$I_{2} \leq
C2^{-2\alpha N}\left(\Gamma_{1}\ast   {d}_{hj} \right)^{2}(N).
$$
Hence, there holds
\be\ba\label{3.10}
 \|S_{N}(u_{h}u_{h})  -S_{N}u_{h}S_{N}u_{h}\|_{  L^{\f{3}{2}}  (\mathbb{R}^{3})}
 \leq C 2^{ -2\alpha N}\left(\Gamma_{1}\ast  {d}_{hj}\right)^{2}(N).
 \ea\ee
The Bernstein inequality leads to
 \be\label{3.8}\ba
\|\partial_{j}S_{N}u_{h}\|_{ L^{3}  (\mathbb{R}^{3})} \leq& C  \sum_{k< N }\| \nabla\Delta_{k} u_{h}  \|_{L^{3}(\mathbb{R}^{3})}
\\
\leq
& C    \sum_{k< N }2^{ k}\| \Delta_{k} u_{h}  \|_{L^{3}(\mathbb{R}^{3})}
\\ =&C 2^{(1- \alpha )N}
\sum_{k< N}2^{- (N-k)(1-\alpha)}2^{ k\alpha}\| {\Delta}_{k}u_{h}\|_{L^{3} (\mathbb{R}^{3})} \\ \leq&C 2^{(1- \alpha )N}
\Gamma_{1}\ast   {d}_{hj}.
\ea\ee
Inserting   \eqref{3.5}  and  \eqref{3.8} into \eqref{key03}, we end up with
\be\ba\label{add}
  |I|
\leq C 2^{(1- 3 \alpha)N} \B(
\Gamma_{1}\ast   {d}_{hj}\B)^{3}.
\ea\ee
To proceed further, we denote
\be\label{K2}
\Gamma_{2}(j)=\left\{\begin{aligned}
	&2^{j\beta},~~~~~~~~\text{if}~j\leq0;\\
	& 2^{-(1-\beta)j},~~\text{if}~j>0,
\end{aligned}\right.
\ee
and    $ {d}_{vj}=2^{j\beta}\| {\Delta}_{j}u_{3}\|_{L^{3} (\mathbb{R}^{3})}$, where $0<\beta<1$.\\
By a suitable modification of the deduction  of \eqref{add}, we can show
that
\be\ba\label{add2}
  |II|+|IV|
\leq C 2^{[1-  (\alpha+2\beta)]N} \B(
\Gamma_{1}\ast   {d}_{hj}\B)  \B(
\Gamma_{2}\ast   {d}_{vj}\B)^{2},
\ea\ee and
\be\ba\label{add3}
  |III|
\leq C 2^{[1-  (2\alpha+\beta)]N} \B(
\Gamma_{1}\ast   {d}_{hj}\B)^{2}  \B(
\Gamma_{2}\ast   {d}_{vj}\B).
\ea\ee
It is worth pointing out that $\alpha\geq1/3$ and $ \alpha+2\beta\geq1 $ leads to
 $ 2\alpha+\beta\geq1$. Hence, we derive from \eqref{add}, \eqref{add2} and \eqref{add3} that
\be\ba
&\left|\int_{0}^{T}(I+II+III+IV)ds\right| \\
\leq& C\int_{0}^{T}\B(
\Gamma_{1}\ast   {d}_{hj}\B)^{3}ds+C\int_{0}^{T}\B(
\Gamma_{1}\ast   {d}_{hj}\B)  \B(
\Gamma_{2}\ast   {d}_{vj}\B)^{2}ds+C\int_{0}^{T}\B(
\Gamma_{1}\ast   {d}_{hj}\B)^{2}  \B(
\Gamma_{2}\ast   {d}_{vj}\B)ds
\\\leq& C\int_{0}^{T}\B(
\Gamma_{1}\ast   {d}_{hj}\B)^{3}ds
+C\B( \int_{0}^{T} (
\Gamma_{1}\ast   {d}_{hj} )^{3}ds\B)^{\f13}\B[\int_{0}^{T} (
\Gamma_{2}\ast   {d}_{vj} )^{3}ds \B]^{\f23}
\\&+C\B[ \int_{0}^{T} (
\Gamma_{1}\ast   {d}_{vj} )^{3}ds\B]^{\f23}
\B[ \int_{0}^{T} (
\Gamma_{2}\ast   {d}_{hj} )^{3}ds\B]^{\f13},
 \ea\ee
where the H\"older inequality was used. \\
This implies that
 \be\ba
&\left|\int_{0}^{T}(I+II+III+IV)ds\right| \\
\leq& C\int_{0}^{T}\|u_{h}\|_{B_{3,\infty}^{\alpha}}^{3}ds
+C\B(\int_{0}^{T}\|u_{h}\|_{B_{3,\infty}^{\alpha}}^{3}ds\B)^{\f13}\B(\int_{0}^{T}\|u_{ 3}\|_{B_{3,\infty}^{\beta}}^{3}ds\B)^{\f23}
\\&+C\B(\int_{0}^{T}\|u_{h}\|_{B_{3,\infty}^{\alpha}}^{3}ds\B)^{\f23}
\B(\int_{0}^{T}\|u_{ 3}\|_{B_{3,\infty}^{\beta}}^{3}ds\B)^{\f13}.
 \ea\ee
Then we conclude by the
dominated convergence theorem  that
$$\ba
&\limsup_{N\rightarrow +\infty}\left|\int_{0}^{T}(I+II+III+IV)ds\right|=0.
 \ea$$
 Hence, we get the energy balance law \eqref{ei} via passing to the limit of $N$.
  \end{proof}
Secondly, we will study the  periodic case by
a Constantin-E-Titi type commutator estimate \eqref{cet} involving
Besov-VMO spaces and
mollifier kernel.
\subsection{Torus case}
\begin{proof}[Proof of Theorem \ref{the1.2}]
Let us begin by
mollifying the equations \eqref{Euler}  in spatial direction to get
\be\ba\label{rmhd}
&\partial_{t}{u^{\varepsilon}} +   (u\cdot\nabla u)^{\varepsilon} +\nabla\Pi^{\varepsilon}= 0.
\ea\ee
Multiplying  \eqref{rmhd} by $u^{\varepsilon}$ and integrating it with respect to $x$ and $t$, we conclude by incompressible condition and integration by parts that
\begin{equation}\ba\label{3.6}
\f12\|u^{\varepsilon}(T)\|^{2}_{L^{2}(\mathbb{T}^3)}
-\f12\|u^{\varepsilon}_{0}  \|^{2}_{L^{2}(\mathbb{T}^3)} =\sum_{ij}^{3}\int_{0}^{T}\int_{\mathbb{T}^{3}}  (u_{j}u_{i} )^{\varepsilon}\partial_{j} u_{i}^{\varepsilon} dxds.
\ea\end{equation}
By virtue of  divergence-free condition, we get
$$\sum_{i,j=1}^{3}\int_{0}^{T}\int_{\mathbb{T}^{3}}  u_{j}^\varepsilon u_{i} ^{\varepsilon}\partial_{j} u_{i}^{\varepsilon} dxds=0.
$$
 Repeating the calculations \eqref{3.2}-\eqref{3.4}, we deduce
\begin{equation}\label{c27}
	\begin{aligned}
	 \f12\|u^{\varepsilon}(T)\|^{2}_{L^{2}(\mathbb{T}^3)}
	 	-\f12\|u^{\varepsilon}_{0} \|^{2}_{L^{2}(\mathbb{T}^3)}
	 	=&\int_{0}^{T}\int_{\mathbb{T}^{3}} [ (u_{h}u_{h} )^{\varepsilon}-  u_{h}^{\varepsilon}u_{h}^{\varepsilon} ]\partial_{h}  u_{h}^{\varepsilon} dxds, \\
 &+ \int_{0}^{T}\int_{\mathbb{T}^{3}} [ (u_{h}u_{3} )^{\varepsilon}-  u_{h}^{\varepsilon}u_{3}^{\varepsilon} ]\partial_{h}  u_{3}^{\varepsilon} dxds, \\
&+ \int_{0}^{T}\int_{\mathbb{T}^{3}} [ (u_{3}u_{h} )^{\varepsilon}-  u_{3}^{\varepsilon} u_{h}^{\varepsilon} ]\partial_{3} u_{h} ^{\varepsilon}dxds\\
& -\int_{0}^{T}\int_{\mathbb{T}^{3}} [ (u_{3}u_{3} )^{\varepsilon}-  u_{3}^{\varepsilon} u_{3}^{\varepsilon}  ] \partial_{1}  u_{1}^{\varepsilon} dxds \\&-\int_{0}^{T}\int_{\mathbb{T}^{3}} [ (u_{3}u_{3} )^{\varepsilon}-  u_{3}^{\varepsilon}u_{3} ^{\varepsilon} ]  \partial_{2} u_{2}^{\varepsilon} dxds\\
&=:I+II+III+IV+V.
	\end{aligned}
	\end{equation}
Thanks to Lemma \ref{lem2.2},  $u_{h}\in L^{3} (0,T;\underline{B}^{\alpha }_{3,VMO}
  )$ and $ u_{3}\in L^{3} (0,T;B^{\beta}_{3,\infty})$, we get
\be\ba
\|\nabla  u_{h}^{\varepsilon}\|_{L^{3}(0,T;L^{3}(\mathbb{T}^3))}\leq o(\varepsilon^{1-\alpha}),\\
\|\nabla  u_{3}^{\varepsilon}\|_{L^{3}(0,T;L^{3}(\mathbb{T}^3))}\leq O(\varepsilon^{1-\beta}).
\ea\ee
From Lemma \ref{lem2.3}, we conclude by $u_{h}\in L^{3} (0,T;\underline{B}^{\alpha }_{3,VMO}
  )$ and $ u_{3}\in L^{3} (0,T;B^{\beta}_{3,\infty})$ that
\be\ba
\|
 (u_{h}u_{h} )^{\varepsilon}-  u_{h}^{\varepsilon}u_{h}^{\varepsilon}
\|_{L^{\f32}(0,T;L^{\f32}(\mathbb{T}^3))}\leq o(\varepsilon^{2\alpha}),
\ea\ee
and
\be\ba
\|
(u_{h}u_{3} )^{\varepsilon}-  u_{h}^{\varepsilon}u_{3}^{\varepsilon}
\|_{L^{\f32}(0,T;L^{\f32}(\mathbb{T}^3))}\leq o(\varepsilon^{ \alpha+\beta}).
\ea\ee
In the light of H\"older inequality, we remark
\be\ba
&|I|\leq\|
 (u_{h}u_{h} )^{\varepsilon}-  u_{h}^{\varepsilon}u_{h}^{\varepsilon}
\|_{L^{\f32}(0,T;L^{\f32}(\mathbb{T}^3))}\|\nabla  u_{h}^{\varepsilon}\|_{L^{3}(0,T;L^{3}(\mathbb{T}^3))}\leq o(\varepsilon^{ 3\alpha-1}),\\
&|II|\leq \|
(u_{h}u_{3} )^{\varepsilon}-  u_{h}^{\varepsilon}u_{3}^{\varepsilon}
\|_{L^{\f32}(0,T;L^{\f32}(\mathbb{T}^3))}\|\nabla  u_{3}^{\varepsilon}\|_{L^{3}(0,T;L^{3}(\mathbb{T}^3))}\leq o(\varepsilon^{ \alpha+2\beta-1}).
\ea\ee
Likewise,
\be\ba
&|III|\leq o(\varepsilon^{ 2\alpha+\beta-1}),
\\
&|IV|\leq o(\varepsilon^{\alpha+ 2\beta-1}),
\\&|V|\leq o(\varepsilon^{\alpha+2\beta-1}).
\ea\ee
Since $ \alpha\geq1/3$ and $ \alpha+2\beta\geq1$, there holds $2\alpha+\beta\geq1$. As a consequence,  passing to the limit of $\varepsilon$ in \eqref{c27}, we get the energy conservation \eqref{ei}.
\end{proof}
Next, we present an application of Theorem \ref{the1.1} to deduce more general anisotropic energy preservation class.
\begin{proof}[Proof of  Corollary \ref{coro1.3}]
It suffices to show that $ u_{h}\in L^{3}(0,T;
B^{\f13}_{3,c(\mathbb{N})}(\mathbb{R}^{3}))$ and $ u_{3}\in L^{3}(0,T;
B^{\f13}_{3, \infty}(\mathbb{R}^{3}))$ via Theorem \ref{the1.1}. First, we assert   that $u_{1}\in L^{p_{1}}(0,T;
B^{\f{1}{p_{1}}}_{\f{2p_{1}}{p_{1}-1},c(\mathbb{N})}(\mathbb{R}^{3}))$ leads to \be\label{3.252}
u_{1}\in L^{3}(0,T;
B^{\f13}_{3,c(\mathbb{N})}(\mathbb{R}^{3})).
 \ee
 Indeed, we deduce from interpolation inequality in Lebesgue spaces that
$$ \|\Delta_{j}u_{1}\|_{L^{3}(\mathbb{R}^{3})}\leq \|\Delta_{j}u_{1}\|_{L^{2}(\mathbb{R}^{3})}^{1-\f{p_{1}}{3}}
 \|\Delta_{j}u_{1}\|_{L^{\f{2p_{1}}{p_{1}-1}}(\mathbb{R}^{3})}^{\f{p_{1}}{3}},\ \text{for\ any }\ p_{1}\in [1,3],$$
which helps us to get
\be\ba\label{3.262}
 2^{\f13 j}\|\Delta_{j}u_{1}\|_{L^{3}(\mathbb{R}^{3})}\leq& \|\Delta_{j}u_{1}\|_{L^{2}(\mathbb{R}^{d})}^{1-\f{p_{1}}{3}}
\B[2^{j\f{1}{p_1}}\|\Delta_{j}u_{1}\|_{L^{\f{2p_{1}}{p_{1}-1}}(\mathbb{R}^{3})}\B]^{\f{p_{1}}{3}}\\
\leq& C\| u_{1}\|_{L^{2}(\mathbb{R}^{3})}^{1-\f{p_{1}}{3}}  \|u_{1}\|^{\f{p_{1}}{3}}_{
B^{\f{1}{p_{1}}}_{\f{2p_{1}}{p_{1}-1},\infty}(\mathbb{R}^{3})}.
\ea\ee
With the help of the definition of Besov spaces, we further get
$$ \|u_{1}\|_{
B^{\f13}_{3,\infty}(\mathbb{R}^{3})}\leq  C\| u_{1}\|_{L^{2}(\mathbb{R}^{3})}^{1-\f{p_{1}}{3}}  \|u_{1}\|^{\f{p_{1}}{3}}_{
B^{\f{1}{p_{1}}}_{\f{2p_{1}}{p_{1}-1},\infty}(\mathbb{R}^{3})}.$$
By the time integration, we infer that
$$ \|u_{1}\|_{L^{3}(0,T;
B^{\f13}_{3,\infty}(\mathbb{R}^{3}))}\leq  C\| u_{1}\|_{L^{\infty}(0,T;L^{2}(\mathbb{R}^{3}))}^{1-\f{p_{1}}{3}}  \|u_{1}\|^{\f{p_{1}}{3}}_{L^{p_{1}}(0,T;
B^{\f1p_{1}}_{\f{2p_{1}}{p_{1}-1},\infty}(\mathbb{R}^{3}))}.$$
In addition, we deduce  from \eqref{3.262} that
$$\ba 2^{\f13 j}\|\Delta_{j}u_{1}\|_{L^{3}(\mathbb{R}^{3})}\leq& C\| u_{1}\|_{L^{2}(\mathbb{R}^{3})}^{1-\f{p_{1}}{3}}
\B[2^{j\f{1}{p_{1}}}\|\Delta_{j}u_{1}\|_{L^{\f{2p_{1}}{p_{1}-1}}(\mathbb{R}^{3})}\B]^{\f{p}{3}},
\ea$$
where $C$ is independent of $j$.
Hence, the assertion \eqref{3.252} is verified.
In a similar
manner, we conclude by $u_{2}\in L^{p_{1}}(0,T;
B^{\f{1}{p_{2}}}_{\f{2p_{2}}{p_{2}-1},c(\mathbb{N})}(\mathbb{R}^{3}))$ and $u_{3}\in L^{p_{3}}(0,T;
B^{\f{1}{p_{3}}}_{\f{2p_{3}}{p_{3}-1},c(\mathbb{N})}(\mathbb{R}^{3}))$ that
$
 u_{2}\in L^{3}(0,T;
B^{\f13}_{3,c(\mathbb{N})}(\mathbb{R}^{3}))$ and $ u_{3}\in L^{3}(0,T;
B^{\f13}_{3, \infty}(\mathbb{R}^{3}))$, respectively. At this stage, we apply Theorem \ref{the1.1} to   complete the proof of this corollary.
\end{proof}

  \section{Energy equality of weak solutions of   viscous incompressible flows}
In the spirit of the previous section, we  study the energy equality of weak solutions to the 3D Navier-Stokes equations \eqref{NS} in the context of anisotropic velocity field.
\begin{proof}[Proof of Theorem \ref{the1.4}]
Along the same line of \eqref{3.5}, we know that
\be\label{4.1}\ba
&\f12\|S_{N}u(T)\|_{L^{2}(\mathbb{R}^{3})}^{2}+\int_{0}^{T}\|\nabla S_{N}u\|_{L^{2}(\mathbb{R}^{3})}^{2}ds \\=&\f12\|S_{N}u_{0}\|_{L^{2}(\mathbb{R}^{3})}^{2}+\int_{0}^{T}(I+II+III+IV+V)ds,
\ea\ee
where
 $$\ba
 &I=  \int_{0}^{T}\int_{\mathbb{R}^{3}} [S_{N}(u_{h}u_{h} )-S_{N} u_{h}S_{N}u_{h} ]\partial_{h} S_{N}u_{h} dxds, \\
& II=   \int_{0}^{T}\int_{\mathbb{R}^{3}} [S_{N}(u_{h}u_{3} )-S_{N} u_{h}S_{N}u_{3} ]\partial_{h} S_{N}u_{3} dxds, \\
 &III= \int_{0}^{T} \int_{\mathbb{R}^{3}} [S_{N}(u_{3}u_{h} )-S_{N} u_{3}S_{N} u_{h} ]\partial_{3} S_{N}u_{h} dxds,
\\
&IV =-\int_{0}^{T} \int_{\mathbb{R}^{3}} [S_{N}(u_{3}u_{3} )-S_{N} u_{3}S_{N}u_{3}  ] \partial_{1} S_{N}u_{1} dx,
 \\
 & V = -\int_{0}^{T}\int_{\mathbb{R}^{3}} [S_{N}(u_{3}u_{3} )-S_{N} u_{3}S_{N}u_{3}  ]  \partial_{2}S_{N}u_{2} dxds.
\ea$$

(1) Case 1: $u_{h}\in L^{p_{1}}(0,T;L^{q_{1}}(\mathbb{R}^{3}))$, $u_{h}\in L^{4}(0,T;L^{4}(\mathbb{R}^{3}))$ and $u_{3}\in L^{p_{2}}(0,T;L^{q_{2}}(\mathbb{R}^{3}))$ with
$\f{1}{p_{1}}+\f{1}{p_{2}}=\f12$ and $\f{1}{q_{1}}+\f{1}{q_{2}}=\f12$, $p_{1}, q_{1}\geq4$.

 In view of the H\"older inequality, we find
$$\ba
|I|\leq& \|S_{N}(u_{h}u_{h} )-S_{N} u_{h}S_{N}u_{h}\|_{L^{2}(0,T;L^{2}(\mathbb{R}^{3}))}\|\partial_{h} S_{N}u_{h}\|_{L^{2}(0,T;L^{2}(\mathbb{R}^{3}))}\\
\leq& C\|S_{N}(u_{h}u_{h} )-S_{N} u_{h}S_{N}u_{h}\|_{L^{2}(0,T;L^{2}(\mathbb{R}^{3}))}\|\partial_{h} u_{h}\|_{L^{2}(0,T;L^{2}(\mathbb{R}^{3}))}.
\ea$$
Likewise,
\be\ba
&|II|\leq C  \|S_{N}(u_{h}u_{3} )-S_{N} u_{h}S_{N}u_{3}\|_{L^{2}(0,T;L^{2}(\mathbb{R}^{3}))}\|\partial_{h} u_{3}\|_{L^{2}(0,T;L^{2}(\mathbb{R}^{3}))},\\
&|III|\leq C\|S_{N}(u_{3}u_{h} )-S_{N} u_{3}S_{N} u_{h}\|_{L^{2}(0,T;L^{2}(\mathbb{R}^{3}))}\|\partial_{3} u_{h}\|_{L^{2}(0,T;L^{2}(\mathbb{R}^{3}))}.
\ea\ee
We derive from Lemma \ref{lem2.1} and $u_{h}\in L^{4}(0,T;L^{4}(\mathbb{R}^{3}))$
that
\be\ba\label{4.4}
\lim_{N\rightarrow\infty}
\|S_{N}(u_{h}u_{h} )-S_{N} u_{h}S_{N}u_{h}\|_{L^{2}(0,T;L^{2}(\mathbb{R}^{3}))}=0,
\ea\ee
which leads to
\be\ba\label{4.5}
\lim_{N\rightarrow\infty}|I|=0.
\ea\ee

Employing Lemma  \ref{lem2.1} once again, we conclude by $u_{h}\in L^{p_{1}}(0,T;L^{q_{1}}(\mathbb{R}^{3}))$ and $u_{3}\in L^{p_{2}}(0,T;L^{q_{2}}(\mathbb{R}^{3}))$  that
\be\ba\label{4.6}
\lim_{N\rightarrow\infty} \|S_{N}(u_{h}u_{3} )-S_{N} u_{h}S_{N}u_{3}\|_{L^{2}(0,T;L^{2}(\mathbb{R}^{3}))}+\|S_{N}(u_{3}u_{h} )-S_{N} u_{3}S_{N} u_{h}\|_{L^{2}(0,T;L^{2}(\mathbb{R}^{3}))}=0.
\ea\ee
where $\f{1}{p_{1}}+\f{1}{p_{2}}=\f12$ and $\f{1}{q_{1}}+\f{1}{q_{2}}=\f12$.

A combination of \eqref{4.4} and \eqref{4.6} enables us to infer that
\be\ba\label{4.7}
\lim_{N\rightarrow\infty}(|II|+|III|)=0.
\ea\ee
According to integration by parts, we rewrite $IV$ and $V$ as
\be\ba
&IV = 2\int_{0}^{T} \int_{\mathbb{R}^{3}} [S_{N}(u_{3}\partial_{1}u_{3} )-S_{N} u_{3}S_{N}\partial_{1}u_{3}  ] S_{N}u_{1} dxds,
\\
 & V =  2\int_{0}^{T}\int_{\mathbb{R}^{3}} [S_{N}(u_{3}\partial_{2}u_{3} )-S_{N} \partial_{2}u_{3}S_{N}u_{3}  ]S_{N}u_{2} dxds.
\ea\ee
The H\"older inequality implies that
\be\ba\label{4.9}
|IV|+|V|\leq& C\|S_{N}(u_{3}\partial_{1}u_{3} )-S_{N} u_{3}S_{N}\partial_{1}u_{3}\|_{L^{\f{p_{1}}{p_{1}-1}}(0,T;L^{\f{q_{1}}{q_{1}-1}}(\mathbb{R}^{3}))}
\|S_{N}u_{h}\|_{L^{p_{1}}(0,T;L^{q_{1}}(\mathbb{R}^{3}))}
\\
\leq& C\|S_{N}(u_{3}\partial_{1}u_{3} )-S_{N} u_{3}S_{N}\partial_{1}u_{3}\|_{L^{\f{p_{1}}{p_{1}-1}}(0,T;L^{\f{q_{1}}{q_{1}-1}}(\mathbb{R}^{3}))}
\| u_{h}\|_{L^{p_{1}}(0,T;L^{q_{1}}(\mathbb{R}^{3}))}.
\ea\ee
It follows from Lemma \ref{lem2.1}, $u_{h}\in L^{p_{1}}(0,T;L^{q_{1}}(\mathbb{R}^{3}))$ and $\partial_{1}u_{3}\in L^{2}(0,T;L^{2}(\mathbb{R}^{3}))$ that
\be\ba\label{4.10}
\lim_{N\rightarrow\infty}
\|S_{N}(u_{3}\partial_{1}u_{3} )-S_{N} u_{3}S_{N}\partial_{1}u_{3}\|_{L^{\f{p_{1}}{p_{1}-1}}(0,T;L^{\f{q_{1}}{q_{1}-1}}(\mathbb{R}^{3}))}
=0,
\ea\ee
where
$\f{1}{p_{1}}+\f{1}{p_{2}}=\f12$ and $\f{1}{q_{1}}+\f{1}{q_{2}}=\f12.$

Combining  \eqref{4.9} and \eqref{4.10}, we know that
\be\ba\label{4.11}
\lim_{N\rightarrow\infty}(|IV|+|V|)=0.
\ea\ee
Passing to the limit of $N$ in \eqref{4.1}, we conclude the energy equality \eqref{EENS} by
\eqref{4.5}, \eqref{4.7} and \eqref{4.11}.

(2) Case 2: $u_{h}\in L^{p_{1}}(0,T;L^{q_{1}}(\mathbb{R}^{3}))$ for $\f{2}{p_{1}}+\f{3}{q_{1}}=1$, $q_{1}\geq 3.$

 We derive from the interpolation inequality and Sobolev inequality that
\be\ba\label{4.110}
\|u_{h}\|_{L^{\f{2p_{1}}{p_{1}-2}}(0,T;L^{\f{6p_{1}}{p_{1}+4}}(\mathbb{R}^{3}))}
\leq&  \|u_{h}\|^{  \f{ 2}{p_1}}_{L^{\infty}(0,T;L^{2}(\mathbb{R}^{3}))}\|u_{h}\|_{L^{2}(0,T;L^{6}(\mathbb{R}^{3}))}
^{  \f{p_1-2}{p_1}}
\\
\leq& C  \|u_{h}\|^{  \f{ 2}{p_1}}_{L^{\infty}(0,T;L^{2}(\mathbb{R}^{3}))}\|\nabla u_{h}\|_{L^{2}(0,T;L^{2}(\mathbb{R}^{3}))}^{  \f{p_1-2}{p_1}},
\ea\ee
and
\be\ba
\|u_{3}\|_{L^{\f{2p_{1}}{p_{1}-2}}(0,T;L^{\f{6p_{1}}{p_{1}+4}}(\mathbb{R}^{3}))}
\leq&  \|u_{3}\|^{  \f{ 2}{p_1}}_{L^{\infty}(0,T;L^{2}(\mathbb{R}^{3}))}\|u_{3}\|_{L^{2}(0,T;L^{6}(\mathbb{R}^{3}))}
^{  \f{p_1-2}{p_1}}\\
\leq& C  \|u_{3}\|^{  \f{ 2}{p_1}}_{L^{\infty}(0,T;L^{2}(\mathbb{R}^{3}))}\|\nabla u_{3}\|_{L^{2}(0,T;L^{2}(\mathbb{R}^{3}))}^{  \f{p_1-2}{p_1}}.
\ea\ee
This together with
Lemma \ref{lem2.1} yields that
\be\ba\label{4.130}
&\lim_{N\rightarrow\infty}
\|S_{N}(u_{h}u_{h} )-S_{N} u_{h}S_{N}u_{h}\|_{L^{2}(0,T;L^{2}(\mathbb{R}^{3}))}=0,\\
&\lim_{N\rightarrow\infty}
\|S_{N}(u_{3}u_{h} )-S_{N} u_{3}S_{N}u_{h}\|_{L^{2}(0,T;L^{2}(\mathbb{R}^{3}))}=0,
\ea\ee
and
\be\ba\label{4.140}
\lim_{N\rightarrow\infty}\|S_{N}(u_{3}\partial_{1}u_{3} )-S_{N} u_{3}S_{N}\partial_{1}u_{3}\|_{L^{\f{p_{1}}{p_{1}-1}}(0,T;L^{\f{q_{1}}{q_{1}-1}}(\mathbb{R}^{3}))}
=0,
\ea\ee
where
$u_{h}\in L^{p_{1}}(0,T;L^{\f{3p_{1}}{p_{1}-2}}(\mathbb{R}^{3}))$ was used.

It follows from  \eqref{4.110}-\eqref{4.130} that
$$\ba
\lim_{N\rightarrow\infty}(|I |+|II|+|III|)=0.
\ea$$
A combination of \eqref{4.9} and \eqref{4.140} implies that
$$\ba
\lim_{N\rightarrow\infty}(|IV|+|V|)=0.
\ea$$
Since Lemma \ref{lem2.1} is valid for  $ L^{\infty}(0,T;L^{3 }(\mathbb{R}^{3})) $ and $ L^{2}(0,T;L^{6}(\mathbb{R}^{3})) $, the above proof still holds for the limiting case
$u_{h}\in L^{\infty}(0,T;L^{3}(\mathbb{R}^{3}))$.

(3) Case 3: $\nabla u_{h}\in L^{p }(0,T;L^{q }(\mathbb{R}^{3}))$ for $\f{2}{p }+\f{3}{q }=2$, $\f32\leq q <\infty.$

By classical  Sobolev embedding, we know that $\nabla u_{h}\in L^{p }(0,T;L^{q }(\mathbb{R}^{3}))$ for $\f{2}{p }+\f{3}{q }=2$ and $\f32\leq q < 3$ guarantees
$u_{h}\in L^{p }(0,T;L^{\f{3q}{3-q} }(\mathbb{R}^{3}))$. From the previous case, we immediately  get the energy equality \eqref{EENS}. It suffices to focus on the case for $3\leq q < \infty.$
Applying Lemma \ref{lem2.6new}, we derive from $u_{3}\in L^{\infty}(0,T;L^{2}(\mathbb{R}^{3}))  $  and $\nabla u_{3}\in L^{2}(0,T;L^{2}(\mathbb{R}^{3}))  $ that
$u_{3}\in L^{p}(0,T;B^{0}_{q,1}(\mathbb{R}^{3})).$
In addition, repeating the deduction of \eqref{4.110}   yields that Leray-Hopf weak solutions satisfy
$u \in L^{p}(0,T; L^{q}(\mathbb{R}^{3}))   $ with $\f{2}{p}+\f{3}{q}=\f32.$\\
With the help of H\"older inequality, we infer
\be\ba
&\left|\int_{0}^{T}\int_{\mathbb{R}^{3}} [S_{N}(u_{h}u_{h} )-S_{N} u_{h}S_{N}u_{h} ]\partial_{h} S_{N}u_{h} dxds\right| \\
\leq& \|S_{N}(u_{h}u_{h} )-S_{N} u_{h}S_{N}u_{h} \|_{ L^{\f{2q }{3}}(0,T;L^{\f{ q}{q-1}} (\mathbb{R}^{3}))} \|\partial_{h} S_{N}u_{h}\|_{L^{\f{2q}{2q-3}}(0,T;L^{q}  (\mathbb{R}^{3}))}.
\ea\ee

Employing Lemma \ref{lem2.1} for $u_{h}\in L^{\f{4q }{3}}(0,T;L^{\f{ 2q}{q-1}} (\mathbb{R}^{3}))$, we notice that
\be\ba \lim_{N\rightarrow\infty}\|S_{N}(u_{h}u_{h} )-S_{N} u_{h}S_{N}u_{h} \|_{ L^{\f{2q }{3}}(0,T;L^{\f{ q}{q-1}} (\mathbb{R}^{3}))}=0,
\ea\ee
from which it follows that
\be\ba
 \lim_{N\rightarrow\infty}|I|=0.
\ea\ee
By the same taken, we find
\be\ba
 \lim_{N\rightarrow\infty}|III|+|IV|+|V|=0.
\ea\ee
It remains to prove that
\be\ba\label{4.190}
 \lim_{N\rightarrow\infty}|II |=0.
\ea\ee

By means of the H\"older inequality, we observe that
\begin{equation}
\label{4.15}
	\begin{aligned}
	 &\B| \int_{\mathbb{R}^{3}}\big[S_{N}(u_{h}u_{3}) -S_{N}u_{h} S_{N}u_{3} \big]\nabla  S_{N}u_{3} dx \B|\\
\leq	& \|S_{N}(u_{h}u_{3})  -S_{N}u_{h}S_{N}u_{3}\|_{ L^{\f{2q}{q+1}} (\mathbb{R}^{3})}  \|\nabla^{k}_{x}S_{N}u_{3}\|_{ L^{\f{2q}{q-1}}  (\mathbb{R}^{3})}.
\end{aligned}\end{equation}
Following the same path of \eqref{ceti}, we notice that
\be\ba\label{4.16}
&S_{N}(u_{h}u_{3}) -S_{N}u_{h} S_{N}u_{3} \\
=&  2^{ 3N}\int_{\mathbb{R}^n}\tilde{h}(2^{N}y)[u_{h}(x-y)-u_{h}(x)]
[u_{3}(x-y)-u_{3}(x)]dy-(u_{h}-S_{N}u_{h})(u_{3}-S_{N}u_{3}).
\ea\ee
In the light  of  the Minkowski inequality, we infer that
$$\ba\label{4.17}
 &\|S_{N}(u_{h}u_{3})  -S_{N}u_{h}S_{N}u_{3}\|_{    L^{\f{2q}{q+1}}(\mathbb{R}^{3})}
 \\\leq& 2^{ 3N}\int_{\mathbb{R}^n}|\tilde{h}(2^{N}y)|\|u_{h}(x-y)-u_{h}(x) \|_{ L^{q}  (\mathbb{R}^{3})}
 \| u_{3}(x-y)-u_{3}(x) \|_{ L^{\f{2q}{q-1}} (\mathbb{R}^{3}) }dy
\\&+\| u_{h}-S_{N}u_{h} \|_{ L^{q}  (\mathbb{R}^{3})} \| u_{3}-S_{N}u_{3}\|_{L^{\f{2q}{q-1}} (\mathbb{R}^{3})}\\
:=&\,II_{1}+II_{2}.
\ea$$
Thanks to the
mean value theorem, we have
\be\ba\label{4.18}
&\|u_{h}(x-y)-u_{h}(x)\|_{ L^{q} (\mathbb{R}^{3})}\leq C |y| \|  u_{h}\|_{W^{1, q} (\mathbb{R}^{3})}.
\ea\ee
It is  apparent  that
\be\ba\label{4.19}
\| u_{3}(x-y)-u_{3}(x) \|_{ L^{\f{2q}{q-1}} (\mathbb{R}^{3}) }\leq C\| u_{3}  \|_{ L^{\f{2q}{q-1}} (\mathbb{R}^{3}) }.
\ea\ee
A combination of  \eqref{4.18} and
\eqref{4.19} enables us to conclude by
$$ \sup_{N\in\mathbb{Z}} 2^{ 3N}\int_{\mathbb{R}^3}  |\tilde{h}(2^{N}y)|  2^{N}|y| dy<\infty $$
that
\be\label{4.20}
II_{1}\leq C 2^{-   N}\|u_{h}\|_{W^{1, q} (\mathbb{R}^{3})} \| u_{3}  \|_{ L^{ \f{2q}{q-1}} (\mathbb{R}^{3})}.
\ee
In view of $B^{0}_{p,2}\subseteq L^{p}(\mathbb{R}^{n}),$ $ p\geq2$ and Bernstein inequality,
we discover that,  for $N>1$,
\be\ba\label{4.22}
\| u_{h}-{S}_{N}u_{h}\|_{ L^{q}  (\mathbb{R}^{3})}\leq& C    \sum_{j\geq N }\| \Delta_{j} u_{h} \|_{L^{q}(\mathbb{R}^{3})}
\\
\leq
&C 2^{- N} \| u_{h}\|_{W^{1,q} (\mathbb{R}^{3})}.
\ea\ee
It is   evident  that
 \be\ba\label{4.23}
\| u_{3}-{S}_{N} u_{3} \|_{ L^{\f{2q}{q-1}}  (\mathbb{R}^{3})}
\leq
 C \| u_{3}  \|_{ L^{\f{2q}{q-1}}  (\mathbb{R}^{3})}.
\ea\ee
Combining \eqref{4.22} and \eqref{4.23}, we obtain
\be\label{4.24}
II_{2}\leq C 2^{-   N}\|u_{h}\|_{W^{1, q} (\mathbb{R}^{3})} \| u_{3}  \|_{ L^{\f{2q}{q-1}} (\mathbb{R}^{3})}.\ee

Hence, we derive from  the Young inequality and the Bernstein inequality  that
 \be\label{4.25}\ba
\|\nabla S_{N}u_{3}\|_{ L^{\f{2q}{q-1}}  (\mathbb{R}^{3})} \leq& C   \sum_{j< N }2^{j}\|   \Delta_{j} u_{3}  \|_{L^{\f{2q}{q-1}}(\mathbb{R}^{3})}
\\
\leq
& C 2^{N}  \sum_{j< N }2^{j-N}\|   \Delta_{j} u_{3}  \|_{L^{\f{2q}{q-1}}(\mathbb{R}^{3})}
\\
\leq& C 2^{   N} \Gamma^{0}\ast d_{3} ,
\ea\ee
where
$$
\Gamma^{0}(j)=\left\{\ba\label{1.10}
&2^{- j},~~~\text{if}~j>0; \\
&2^{j },~~~~~\text{if}~j\leq 0,
\ea\right.$$
and
$$
d_{3}(j)=\| {\Delta}_{j}u_3\|_{L^{\f{2q}{q-1}} (\mathbb{R}^{3})}.
$$
 Collecting the above estimates, we end up with
 $$\ba
 \B| \int_{\mathbb{R}^{3}}\big[S_{N}(u_{h}u_{3}) -S_{N}u_{h} S_{N}u_{3} \big]\nabla  S_{N}u_{3} dx \B|
 \leq
 C  \|u_{h}\|_{W^{1, q} (\mathbb{R}^{3})} \| u_{3}  \|_{ L^{\f{2q}{q-1}} (\mathbb{R}^{3})}\Gamma^{0}\ast d_{3} (N).
 \ea
 $$
 Performing a time integration, we discover that
 $$\ba
 &\int_{0}^{T}\B| \int_{\mathbb{R}^{3}}\big[S_{N}(u_{h}u_{3}) -S_{N}u_{h} S_{N}u_{3} \big]\nabla  S_{N}u_{3} dx \B|dt\\
 \leq&
 C \int_{0}^{T}\|u_{h}\|_{W^{1, q} (\mathbb{R}^{3})} \| u_{3}  \|_{ L^{\f{2q}{q-1}} (\mathbb{R}^{3})}\Gamma^{0}\ast d_{3} (N) dt\\
 \leq&
 C \B(\int_{0}^{T}\|u_{h}\|^{\f{2q}{2q-3}}_{W^{1, q} (\mathbb{R}^{3})} dt\B)^{\f{2q-3}{2q}}\B(\int_{0}^{T}\| u_{3}  \|^{\f{4q}{3}} _{ L^{\f{2q}{q-1}} (\mathbb{R}^{3})}dt\B)^{\f{3}{4q}}\B(\int_{0}^{T}(\Gamma^{0}\ast d_{3} )^ {\f{4q}{3}}dt\B)^{\f{3}{4q}}.
 \ea
 $$
 Before going further, we shall prove that $  u_{h}\in L^{p }(0,T;W^{1, q }(\mathbb{R}^{3}))$ for $\f{2}{p }+\f{3}{q }=2$ and $3\leq q < \infty$.
It follows from Gagliardo-Nirenberg inequality that, for $q>2$
$$
\|u_{h}\|_{L^{q}(\mathbb{R}^{3})}\leq C \|u_{h}\|^{\f{2q}{5q-6}}_{L^{2}(\mathbb{R}^{3})}\|\nabla u_{h}\|_{L^{q}(\mathbb{R}^{3})}^{\f{3q-6}{5q-6}},
$$
which turns out that
$$
\int_{0}^{T}\|u_{h}\|^{p}_{L^{q}(\mathbb{R}^{3})}ds\leq C \|u_{h}\|^{\f{2q}{5q-6}}_{L^{\infty}(0,T; L^{2}(\mathbb{R}^{3}))}\int_{0}^{T}\|\nabla u_{h}\|_{L^{q}(\mathbb{R}^{3})}^{\f{p(3q-6)}{5q-6}}ds.
$$
It has been shown that $  u_{h}\in L^{\f{2q}{2q-3}}(0,T;W^{1, q }(\mathbb{R}^{3}))$ for   $3\leq q < \infty$. With this in hand, owing to  $u_{3}\in L^{\f{4q}{3}} (0,T;B^{0}_{\f{2q}{q-1},1}
  ) $ and the Lebesgue dominated convergence theorem,  we confirm \eqref{4.190}. The proof of this part is completed.

(4)
We derive from $u_{h}\in L^{2}(0,T;H^{1}(\mathbb{R}^3))$ and Sobolev embedding theorem that $u\in L^{2}(0,T;H^{\gamma}(\mathbb{R}^3))$ for $0\leq\gamma\leq1$. Since $H^{\gamma}(\mathbb{R}^3)\approx B^{\gamma}_{2,2}$, we conclude  by  $B^{\gamma}_{2,2}\subseteq B^{\gamma-\f12}_{3,2}$ that $u_{h}\in L^{2}(0,T; B^{\gamma-\f12}_{3,2})$. To proceed further, we set $\gamma-\f12=\alpha$, which means $\alpha\leq\f12.$ This together with the Lebesgue dominated convergence theorem and $ u_{h}\in L^{3}(0,T;B_{3,\infty}^{\alpha})$
means that $ u_{h}\in L^{3}(0,T;B_{3,c(\mathbb{N})}^{\alpha})$.
Theorem \ref{the1.4} is thus proved.
\end{proof}

\begin{proof}[Proof of Corollary \ref{coro1.5}]
The  Gagliardo-Nirenberg   inequality  guarantees that,  for  $\frac{2}{p}+\frac{2}{q}=1$ with $q\geq 4$
\be\label{gn1}
\begin{aligned}
\|f\|_{L^{4}(0,T;L^{4}(\mathbb{R}^{3}) ))}  \leq& C\|f\|_{L^{\infty}\left(0,T;L^{2}(\mathbb{R}^{3})\right)}^{\frac{(q-4)}{2 q-4}}\|f\|_{L^{p}(0,T;L^q(\mathbb{R}^{3}))}^{\frac{q}{2q-4}}.
\end{aligned}
\ee
Using the Gagliardo-Nirenberg   inequality again, we know that,  for $\frac{1}{p}+\frac{3}{q}=1$ with $3<q<4$
\be\label{gn2}
\begin{aligned}
\|f\|_{L^{4}(0,T;L^{4}(\mathbb{R}^{3}))}
&\leq C\|f\|_{L^2(0,T;L^6(\mathbb{R}^{3}))}^{\frac{3(4-q)}{2(6-q)}}
\|f\|_{L^p(0,T;L^q(\mathbb{R}^{3}))}^{\frac{q}{2(6-q)}}
\\
&\leq C\|\nabla f\|_{L^2(0,T;L^2(\mathbb{R}^{3}))}^{\frac{3(4-q)}{2(6-q)}}
\|f\|_{L^p(0,T;L^q(\mathbb{R}^{3}))}^{\frac{q}{2(6-q)}}.
\end{aligned}
\ee
(1)   Due to hypothesis and inequality \eqref{gn1}, we observe that $ u_{h}\in L^{4}(0,T;L^{4}(\mathbb{R}^{3}))$. We  finish the proof of this part by Theorem \ref{the1.4} immediately.\\
(2) It suffices to apply  \eqref{gn2} and assumption to get $ u_{h}\in L^{4}(0,T;L^{4}(\mathbb{R}^{3}))$. Theorem \ref{the1.4} helps us to prove this case.\\
(3) From \eqref{gn1} and \eqref{gn2}, we have $u_i, u_j\in L^{4}(0,T;L^{4}(\mathbb{R}^{3})).$ The classical Lions' energy class entails the proof of this part. The corollary is proved.
\end{proof}

\section*{Acknowledgements}
 Wang was sponsored by Natural Science Foundation of
Henan Province (No. 232300421077)  and supported by  the National Natural
Science Foundation of China under grant (No. 11971446 and No. 12071113). Wei was partially supported by the National Natural Science Foundation of China under grant (No. 12271433).
 Ye was
partially sponsored by the Training Program for Young Backbone Teachers in
Higher Education Institutions of Henan Province (2024GGJS022).





\end{document}